\newtheorem{thm}{Theorem}[section]
\newtheorem*{thm*}{Theorem}
\newtheorem*{conj*}{Conjecture}
\newtheorem{lem}[thm]{Lemma}
\newtheorem{theorem}[thm]{Theorem}
\newtheorem{hypothesis}[thm]{Hypothesis}
\newtheorem{lemma}[thm]{Lemma}
\newtheorem{conjecture}[thm]{Conjecture}
\theoremstyle{remark}
\newtheorem{remark}[thm]{Remark}
\theoremstyle{definition}
\newtheorem{example}[thm]{Example}
\newcounter{claim}[thm]
\DeclareMathOperator{\Alt}{Alt}
\DeclareMathOperator{\aut}{Aut}
\DeclareMathOperator{\Sym}{Sym}
\DeclareMathOperator{\Aut}{Aut}
\DeclareMathOperator{\Soc}{Soc}
\newcommand{\LL}{\mathcal{L}}
\newcommand{\PP}{\mathcal{P}}
\newcommand{\Inn}{\mathrm{Inn}}
\newcommand{\Core}{\mathrm{Core}}
\newcommand{\soc}{\mathrm{Soc}}
\newcommand{\AGL}{\mathrm{AGL}}
\newcommand{\GL}{\mathrm{GL}}
\newcommand{\Sp}{\mathrm{Sp}}
\newcommand{\wh}{\widehat}
\newcommand{\R}{\mathcal{R}}
\title{Prime power coverings of groups}
\author{Michael Giudici, Luke Morgan and  Cheryl E. Praeger 
}
\begin{document}

\maketitle

\begin{abstract}


For a finite group $A$ with normal subgroup $G$, a subgroup $U$ of $G$ is an $A$-prime-power-covering subgroup if $U$  meets every $A$-conjugacy-class of elements of $G$ of prime power order. It is conjectured that $|G:U|$ is bounded by some function of $|A:G|$, and this conjecture has number theoretic implications for relative Brauer groups of algebraic number fields. We prove the conjecture in the case that the action of $G$ on  the set of right cosets of $U$ in $G$ is innately transitive. This includes the case where $U$ is a maximal subgroup of $G$. The proof uses a new bound on the order of a  nonabelian finite simple group  in terms of its number of classes of elements of prime power order, which in turn depends on the Classification of the Finite Simple Groups.
%

\medskip

\noindent{\bf Keywords:} covering subgroups, prime-power-covering, primitive groups\\
\noindent{\bf 2010 Mathematics Subject Classification:} 20B05, 20B15
\end{abstract}


\section{Introduction}\label{s:intro}


A famous result of Jordan \cite{Jordan} asserts that every finite transitive permutation group on a set of size at least two contains a derangement (an element with no fixed points). This is equivalent to the statement that it is not possible to cover the elements of a finite group by the set of conjugates of a proper subgroup. To see 
the equivalence, given a group $G$ with a subgroup $U$ consider the action of $G$ on the set of right cosets of $U$. Elements of $G$ fix a point in this action precisely if they are conjugate to an element of $U$ and so the guaranteed derangement is not contained in a conjugate of $U$. 

Jordan's result has  interesting applications in a wide variety of mathematical areas, see for example Serre's article \cite{Serre}. It has also motivated many questions about covering groups by conjugates of subgroups. 
One direction is to investigate how many conjugacy classes of subgroups are needed to cover the elements of a group. Such a covering is called a \emph{normal covering} and the number of conjugacy classes involved is called the \emph{normal covering number}. Groups with normal covering number two exist, for example when $n$ and  $q$ are even, the symplectic group $\Sp_n(q)$ can be covered by conjugates of its subgroups the orthogonal groups $\mathrm{O}^+_n(q)$ and $\mathrm{O}^-_n(q)$. All finite simple groups with normal covering number two have recently been classified by Bubboloni, Spiga and Wiegel \cite{BSW} who also gave an
account of the many other areas of mathematics related to groups having normal covering number two.

Another natural direction to explore is to allow images of the subgroup under automorphisms of the group. Given $U<G \trianglelefteq A$, we say that $U$ is an \emph{$A$-covering subgroup} of $G$ if every element of $G$ is $A$-conjugate to an element of $U$. Such coverings were explored by Brandl \cite{Br} and are connected to questions about Kronecker classes of fields, see \cite{CP88,kron} for a  discussion of the links. 
Saxl \cite[Proposition 2]{Saxl} proved that a nonabelian finite simple group $T$ has no proper $\Aut(T)$-covering subgroup and,
building on work of Jehne \cite[Theorem 5 and Remark (6.4)]{J77}, showed that if $M$ is Kronecker equivalent to a quadratic extension $K$ of an algebraic number field $k$ then $M=K$, answering a question posed in \cite{J77}. (For a finite extension $K$ of an algebraic number field $k$, the \emph{Kronecker set} of $K$ over $k$ is the set of all prime ideals of the ring of integers of $k$ having a prime divisor of relative degree one in $K$; and two finite extensions of $k$ are \emph{Kronecker equivalent} if their Kronecker sets have finite symmetric difference.) 

Neumann and the third author conjectured that the index in $G$ of an $A$-covering subgroup of $G$ is bounded by a function $f$ of $|A:G|$, see \cite[Problem 11.71]{Kourovka} and \cite[Conjecture 4.1]{kron}.   The Neumann-Praeger Conjecture is equivalent to the following: for an extension $L$ of a Galois extension $K/k$ of degree $n$ of algebraic number fields, if $L$ and $K$ are Kronecker equivalent over $k$ then the degree of $L$ over $K$ is bounded above by a function of $n$. Saxl's result implies that $f(2)=1$. 
 The third author   proved this conjecture in the cases where $U$ is a maximal subgroup of $G$ \cite[Theorem 4.4]{kron}.
Fusari, Harper and Spiga \cite{FHS} have proved the conjecture where the $A$-chief length of $G$ is bounded, correcting a subtle mistake in the first attempt of a proof of this result \cite[Theorem 4.6]{kron}. 
A surprising  reformulation  \cite[Question 6.1]{MRS} of the Neumann--Praeger Conjecture 
relates the conjecture to the problem of  bounding the degree of a permutation group whose derangement graph has no $k$-clique. This link is discussed in detail by Fusari, Previtali and Spiga (see \cite[Section 1.1]{FPS}) and they establish the veracity of 
\cite[Conjecture 4.1]{kron}
 in the case where the action of $G$ on the set of right cosets of $U$ is innately transitive, that is, has a transitive minimal normal subgroup \cite[Theorem 1.1]{FPS}.

Fein, Kantor and Schacher \cite[Theorem 1]{FKS} proved a dramatic extension of Jordan's result by showing that every finite transitive permutation group of degree at least two contains a derangement of prime power order. Whereas Jordan's result is a simple application of the Orbit-Counting Lemma, the only known proof of \cite[Theorem 1]{FKS} relies on the full strength of the Classification of the Finite Simple Groups. The motivation for this generalisation is that it is equivalent to the fact that the relative Brauer group of a nontrivial finite extension of global fields is infinite, see \cite[\S3]{FKS}.

The result of Fein, Kantor and Schacher can also be rephrased to say that the set of elements of prime power order in a group cannot be covered by the set of conjugates of a proper subgroup. This rephrasing naturally lends itself to exploring the notion of an $A$-prime-power-covering subset as follows.  For a subgroup $U$ of a finite group $A$, and an element $x\in A$ let $x^A=\{x^a : a\in A\}$ denote the $A$-class of $x$, and let $\PP(U) = \{ x\in U : |x| \ \text{is a prime power} \}$ denote the subset of elements of $U$ of prime power order. We define 
\begin{center}\label{def:ppc}
    $P_A(U):= \{ x^a : x\in\PP(U),\ a\in A\}$,\quad   the union of $A$-classes of elements of prime power order that intersect $U$ nontrivially,
\end{center}
and we call $P_A(U)$  the \emph{$A$-prime-power-covering set of $U$}.
\noindent
If subgroups $U$ and $G$ of $A$ are conjugate then they have the same $A$-prime-power-covering sets. It is also possible for subgroups that are not $A$-conjugate to have this property. Perhaps the smallest example is $A=\Alt(4)$ with $G=C_2^2$ and $U=C_2$, and the following small examples from \cite{CP88} show that $A$ can be insoluble.

\begin{example} 
In each of the following cases, $P_A(U)=P_A(G)$. 
\begin{enumerate}[(a)]
\item \cite[Examples 1--2]{CP88}\quad Take $A=\AGL_3(2)$ and $G=C_2^3.D_8$ a Sylow $2$-subgroup of $A$, and take $U= C_2\times D_8<G$, where the $C_2$ is the centraliser of $D_8$ in $C_2^3$.  For a second example take the same $A$ and $G$, and take $U=C_2\times C_4<G$.  
\item \cite[Example 3]{CP88}\quad Take $A=\GL_3(2)$ and $G\cong D_8$, a Sylow $2$-subgroup, and $U\cong C_4<G$. 
\end{enumerate}
\end{example}

We give several infinite families of examples of such triples $(A, G, U)$ in Section~\ref{s:examples}, but the small examples given here are sufficient to demonstrate that subgroups $U$ and $G$ of a group $A$ with equal $A$-prime-power-covering sets need not have the same order. 
Of course, the result of Fein, Kantor and Schacher implies that in the extreme case where $G=A$ there does not exist a proper subgroup $U<A$ with $P_A(U)=P_A(G)$. Thus questions about how different $U$ and $G$ can be, if $U$ and $G$ have equal $A$-prime-power-covering sets, concern proper subgroups $U$ and $G$ of $A$. One might conjecture that their orders cannot differ too much.

\begin{conjecture}\label{conj1}
   There is an increasing integer function $g$ such that, if $A$ is a finite group with subgroups $U$ and $G$ such that $P_A(U)=P_A(G)$ and $|A:G|=n$,  then $|A : U| < g(n)$. 
\end{conjecture}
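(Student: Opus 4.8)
\medskip

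\noindent The plan is to reduce the conjecture to the case where $G$ is normal in $A$, and then to attack that case by induction on $|G|$, using the innately transitive case as the base of the induction.

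\smallskip

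\noindent\emph{Reduction to $G\trianglelefteq A$.} Put $K=\Core_A(G)$, so $K\trianglelefteq A$ and $|A:K|\le|A:G|!=n!$, and set $V=K\cap U\le K$. If $x\in K$ has prime power order then $x\in\PP(G)$, so $x$ is $A$-conjugate to some $y\in\PP(U)$; since $K\trianglelefteq A$ we also have $y\in K$, hence $y\in\PP(V)$. Thus $V$ meets every $A$-class of prime power elements of $K$, that is $P_A(V)=P_A(K)$. As $V\le U$ we have $|A:U|\le|A:V|$, so it suffices to bound $|A:V|$ in terms of $|A:K|$; equivalently, it is enough to prove the conjecture under the additional hypothesis that $G\trianglelefteq A$ (replacing $(A,G,U)$ by $(A,K,V)$ and $n$ by $|A:K|\le n!$). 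From now on assume $G\trianglelefteq A$, $U\le G$, and $U$ meets every $A$-class of prime power elements of $G$; we must bound $|G:U|$ in terms of $m:=|A:G|$.

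\smallskip

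\noindent\emph{Base case.} If the action of $G$ on the right cosets of $U$ is innately transitive --- in particular if $U$ is maximal in $G$, or normal in $G$, or $G$ is simple --- then $|G:U|$ is bounded in terms of $m$ by the main result of this paper. In that proof one passes to a transitive minimal normal subgroup $N\cong T^{k}$ of $G$; the argument of the previous paragraph shows that $U\cap N$ meets every $A$-class of prime power elements of $N$, and since $N$ is transitive on the cosets of $U$ we have $|G:U|=|N:U\cap N|$, which the new CFSG-based bound on $|T|$ in terms of the number of classes of prime power elements of $T$ (proved below) forces to be bounded in terms of $m$.

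\smallskip

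\noindent\emph{Inductive step and the main obstacle.} Suppose the coset action is not innately transitive; we induct on $|G|$. Since $G$ is a nontrivial normal subgroup of $A$, it contains a minimal normal subgroup $N$ of $A$; pass to $\ba A=A/N$, $\ba G=G/N$, $\ba U=UN/N$. For a prime $p\nmid|N|$ and $\ba g\in\ba G$ of $p$-power order, take a preimage $g\in G$ and apply Schur--Zassenhaus in $\la g\ra N$: this yields an element $c$ of $p$-power order in $\la g\ra N$ with $cN=\ba g^{\,j}$ for some $j$ coprime to $p$; as $c\in\PP(G)$ it is $A$-conjugate into $U$, and raising to a suitable power shows $\ba g$ is $\ba A$-conjugate into $\ba U$. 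Thus $\ba U$ meets every $\ba A$-class of prime power elements of $\ba G$ \emph{of order coprime to $|N|$}, and one would like to bound $|G:U|=|G:UN|\cdot|N:U\cap N|$ by combining a bound on $|G:UN|=|\ba G:\ba U|$ from the inductive hypothesis with a bound on $|N:U\cap N|$ from what survives of the covering condition on $N$ (the simple-group bound when $N$ is nonabelian; a module-theoretic argument when $N$ is elementary abelian). The gap is that for primes $p\mid|N|$ a prime power element of $\ba G$ need not be the image of a prime power element of $G$, so $\ba U$ need \emph{not} meet every $\ba A$-class of prime power elements of $\ba G$, and the inductive hypothesis does not apply to $(\ba A,\ba G,\ba U)$. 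One is thus forced to control, chief factor by chief factor along a chief series of $G$ in $A$, the primes at which prime-power covering fails to descend; but this series can be arbitrarily long relative to $m$, so a bounded loss per factor still threatens to accumulate without control. This is precisely the phenomenon that invalidated the first attempt at the classical conjecture in \cite[Theorem~4.6]{kron} and that \cite{FHS} could handle only under a bounded-chief-length hypothesis. Proving that the cumulative loss is nevertheless a function of $m$ alone --- say, by showing that prime-power covering fails to descend at only boundedly many chief factors, or that $N$ can be chosen at each stage to make the descent exact --- is the crux, and is why the conjecture is settled here only when the coset action is innately transitive.
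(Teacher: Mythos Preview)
The statement is Conjecture~\ref{conj1}, which the paper explicitly leaves open; there is no proof in the paper to compare against. What the paper does prove is (i) that Conjecture~\ref{conj1} is equivalent to Conjecture~\ref{conj2} (Theorem~\ref{t:equiv}), and (ii) that Conjecture~\ref{conj2} holds when the $G$-action on $[G:U]$ is innately transitive (Theorem~\ref{t:mainIT}).

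Your reduction paragraph is correct and is essentially the paper's proof of Theorem~\ref{t:equiv}: pass to $K=\Core_A(G)$, show $P_A(K)=P_A(U\cap K)$, and thereby reduce to the normal case at the cost of replacing $n$ by at most $n!$. Your base case likewise correctly invokes Theorem~\ref{t:mainIT}. (One small slip: ``$U$ normal in $G$'' does not in general make the coset action innately transitive; after factoring out the kernel $U$, the group $G/U$ acts regularly, and a regular group is innately transitive only when it is simple.)

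The substantive point is that your proposal is not a proof, and you say so yourself. In the inductive step you show that $\ba U$ meets every $\ba A$-class of prime power elements of $\ba G$ of order coprime to $|N|$, but you do not --- and, as you acknowledge, cannot without a new idea --- establish the full condition $P_{\ba A}(\ba G)=P_{\ba A}(\ba U)$ needed to invoke the inductive hypothesis on $(\ba A,\ba G,\ba U)$. Your final paragraph is an accurate diagnosis of why the conjecture remains open (and correctly flags the analogy with the flawed argument behind \cite[Theorem~4.6]{kron} and the bounded-chief-length repair in \cite{FHS}), but a diagnosis is not a proof. As it stands, the proposal reproduces the reduction of Section~\ref{s:equiv} and the innately transitive result of Section~\ref{s:main}, and then halts at precisely the obstacle at which the paper halts.
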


In the case where $G\trianglelefteq A$, the property $P_A(U)=P_A(G)$ forces the  subgroup $U$ to be contained in $G$. 
This particular case of Conjecture~\ref{conj1} is very similar to \cite[Conjecture $4.3'$]{kron} dating from 1994.

\begin{conjecture}\label{conj2}
   There is an increasing integer function $f$ such that, if $A$ is a finite group with normal subgroup $G$ of index $|A:G|=n$, and if $U\leqslant G$ is such that $P_A(U)=P_A(G)$, then $|G : U| < f(n)$. 
\end{conjecture}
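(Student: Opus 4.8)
The plan is to approach Conjecture~\ref{conj2} by induction on $|G|$, using block reductions to strip off intransitive normal subgroups and reduce to the innately transitive case, which is established in this paper. Write $\Delta$ for the set of right cosets of $U$ in $G$, so $G$ acts transitively on $\Delta$ with point stabiliser $U$, and put $n=|A:G|$.

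\emph{Reductions.} Set $K=\Core_A(U)\trianglelefteq A$, so $K\leqslant U$. If $\bar x\in G/K$ has prime power order $p^a$, then writing a preimage as the product of its commuting $p$-part and $p'$-part, and using that the $p'$-part maps to the identity, we see that $\bar x$ lifts to a $p$-element of $G$; hence $P_A(U)=P_A(G)$ forces $P_{A/K}(U/K)=P_{A/K}(G/K)$, while $|A/K:G/K|=n$ and $|G/K:U/K|=|G:U|$. So we may assume $\Core_A(U)=1$: then $U$ contains no nontrivial normal subgroup of $A$, and $[A:U]$ consists of $n$ copies of $\Delta$ permuted transitively by $A$.

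\emph{Dichotomy and descent.} If $G$ is innately transitive on $\Delta$, apply the innately transitive case of Conjecture~\ref{conj2} from this paper to get $|G:U|<f(n)$. Otherwise no minimal normal subgroup of $G$ is transitive on $\Delta$; choose one, $N_1$, and set $N=\langle N_1^a:a\in A\rangle\trianglelefteq A$, a normal subgroup of $G$ which is a direct product of $A$-conjugates of $N_1$, none of them transitive. Note that $N$ cannot be simultaneously abelian and transitive: an abelian transitive group is regular, so $N\cap U=1$, and then every nontrivial element of $N$, being of prime power order, would be $A$-conjugate into $U\cap N=1$ (as $N\trianglelefteq A$), which is absurd. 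Thus $N$ is either intransitive, or nonabelian and transitive. When $N$ is intransitive its orbits form a nontrivial $A$-invariant block system; the block stabiliser $V:=UN$ satisfies $U<V<G$, and passing to the block action of $G/N$ on $[G:V]$ — with point stabiliser $V/N$, and, by the same lifting argument together with $N\trianglelefteq A$, with $P_{A/N}(V/N)=P_{A/N}(G/N)$ — the inductive hypothesis bounds $|G:V|$ in terms of $n$. When $N$ is transitive (and nonabelian) put $V=G$. In either case it remains to bound $|V:U|=|N:N\cap U|$, the length of an $N$-orbit on $\Delta$.

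\emph{The main obstacle.} Since $\PP(N)\subseteq P_A(G)=P_A(U)$ and $N\trianglelefteq A$, every element of $N$ of prime power order lies in some conjugate $(N\cap U)^a$, so $\PP(N)=\bigcup_{a\in A}\PP\bigl((N\cap U)^a\bigr)$. If $N$ is elementary abelian then every element has prime power order, $N=\bigcup_a (N\cap U)^a$, and $V=UN$ normalises $N\cap U$ (as $N$ is abelian and $U$ normalises $U\cap N$), so the number of distinct conjugates is at most $|A:V|$, which is bounded in terms of $n$; counting the cover then gives $|N:N\cap U|\leqslant|A:V|$, and the induction closes. The genuine difficulty is $N\cong T^k$ with $T$ nonabelian simple: here $N$ need not normalise $N\cap U$, $\PP(N)$ can be a vanishingly small proportion of $N$ (as already for alternating $T$, or for large $k$), and the crude counting above is vacuous. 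One must instead use that each $A$-class of prime-power-order elements of $N$ meets $N\cap U$, and that each such $A$-class comprises at most $n$ of the $G$-classes, to relate $|N:N\cap U|$ to the number of $G$-classes of prime-power-order elements of $N$; and then bound that number of classes below in terms of $|T|$ and $k$, invoking the new inequality of this paper of the shape $|T|<h(c(T))$ with $c(T)$ the number of classes of prime-power-order elements of $T$, so as to obtain bounds on $|T|$, on $k$, and hence on $|N|$ and $|V:U|$. Executing this for an intransitive, or transitive product-of-conjugates, nonabelian minimal normal subgroup — without the O'Nan--Scott or Bamberg--Praeger structure theory that powers the innately transitive case — is the principal obstacle, and is why the general conjecture appears to lie strictly deeper than the innately transitive case proved here.
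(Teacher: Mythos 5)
There is a genuine gap, and it is one you yourself concede. The statement you are proving is Conjecture~\ref{conj2}, which is \emph{not} proved in this paper: the paper establishes it only when the $G$-action on $[G:U]$ is innately transitive (Theorem~\ref{t:mainIT}), which covers the maximal-subgroup case (Theorem~\ref{t:main}), and the general conjecture remains open. Your reductions are sound as far as they go and in fact mirror the paper's own tools: the passage to $\Core_A(U)=1$ is Lemma~\ref{l:cortriv}, the inheritance $P_A(N)=P_A(N\cap U)$ for $N\trianglelefteq A$ with $N\leqslant G$ is Lemma~\ref{l:P_A(S) = P_A(UcapS)}, and your counting in the elementary abelian case is essentially the argument of Lemma~\ref{l:affine}. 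But the induction does not close. In the descent step the remaining quantity $|V:U|=|N:N\cap U|$ must be bounded in terms of $n$ when $N$ is a nonabelian $A$-normal subgroup isomorphic to $T^k$ that is either intransitive on $\Delta$, or transitive while no \emph{minimal} normal subgroup of $G$ is transitive (so that Theorem~\ref{t:mainIT} does not apply). Your final paragraph sketches only a hoped-for strategy for this (relate $|N:N\cap U|$ to numbers of classes of prime-power elements and invoke Theorem~\ref{t:mT}) and explicitly labels it ``the principal obstacle''; no argument is given, and this is exactly the content that makes the conjecture open. Note also that even the steps that do work require Theorem~\ref{t:mT} and the Guralnick--Saxl Theorem, so nothing here is more elementary than the paper's partial result.

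A secondary caution: if you do set up an induction on $|G|$, you must formulate the inductive statement so that the bounding function is uniform in $n$ (e.g.\ ``for all pairs $(A,G)$ with $|A:G|=n$ and $|G|\leqslant N$''), and verify that the bound produced for $|G:V|$ via the quotient $A/N$ composes with the bound for $|V:U|$ to give a function of $n$ alone. As written, the composition cannot even be attempted because the second factor is unbounded in the unresolved case.
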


Note that the conjectures coincide when $n=1$, that is, if $A=G$. In this case the truth of both conjectures follows from the Fein--Kantor--Schacher result which, as we mentioned, depends on the Classification of the Finite Simple Groups. This  suggests that the truth (or otherwise) of these conjectures may rely on rather deep group theory. 

If $U<G \trianglelefteq A$, then we say that $U$ is  an \emph{$A$-prime-power-covering subgroup of $G$} if $P_A(U)=P_A(G)$, that is, if $U$ meets every $A$-class of elements of $G$ of prime power order.  Conjecture~\ref{conj2} therefore asserts that the index in $G$ of an $A$-prime-power-covering subgroup of $G$ is bounded by a function of $|A:G|$. Guralnick \cite[Theorem 2.1]{G90} extended the results of Saxl and Fein--Kantor--Schacher to show that a finite nonabelian simple group $T$ does not have a proper $\Aut(T)$-prime-power-covering subgroup (see Theorem \ref{t:guralsaxl}, which we call the Guralnick--Saxl Theorem) and this again has consequences for relative Brauer groups, see \cite[Corollary B]{G90}. The Guralnick-Saxl Theorem confirms Conjecture~\ref{conj2} when $A$ is an almost simple group with socle $G$.  Further, an important special case of Conjecture~\ref{conj1} follows from \cite[Theorem A]{G90}, namely the case where the permutation group induced by $A$ on the set of $n$ cosets of $G$ is $\Sym(n)$ or $\Alt(n)$, and the proof of this result also relies on the Guralnick--Saxl Theorem.


The first aim of this paper is to show that the two conjectures above are equivalent.

\begin{theorem}\label{t:equiv}
    Conjecture~$\ref{conj1}$ is true if and only if Conjecture~$\ref{conj2}$ is true.
\end{theorem}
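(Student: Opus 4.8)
The plan is to prove the two implications separately. The direction ``Conjecture~\ref{conj1} $\Rightarrow$ Conjecture~\ref{conj2}'' is essentially immediate: Conjecture~\ref{conj2} is the special case of Conjecture~\ref{conj1} in which $G$ is assumed normal in $A$, together with the observation (already noted in the excerpt) that when $G \trianglelefteq A$ the hypothesis $P_A(U) = P_A(G)$ forces $U \leqslant G$, so that $|G:U| = |A:U|/|A:G|$ is controlled once $|A:U|$ is. Hence if $g$ works for Conjecture~\ref{conj1} then $f(n) := g(n)$ (or $g(n)/n$, rounded appropriately) works for Conjecture~\ref{conj2}.

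The substantive direction is ``Conjecture~\ref{conj2} $\Rightarrow$ Conjecture~\ref{conj1}''. Here I would start from an arbitrary triple $(A, G, U)$ with $P_A(U) = P_A(G)$ and $|A:G| = n$, where $G$ need \emph{not} be normal in $A$, and reduce to the normal case by passing to the normal core $N := \Core_A(G) = \bigcap_{a \in A} G^a$. The key point is that $N \trianglelefteq A$, and $|A:N|$ is bounded by a function of $n = |A:G|$ — indeed $A/N$ embeds in $\Sym(n)$ via the action on the $n$ cosets of $G$, so $|A:N| \leqslant n!$. The idea is then to intersect everything with $N$: set $U_0 := U \cap N$ and $G_0 := N$. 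I would want to show that $P_A(U_0) = P_A(G_0)$, i.e.\ that $U \cap N$ meets every $A$-class of prime-power-order elements of $N$. Granting this, Conjecture~\ref{conj2} applied to $U_0 \leqslant N \trianglelefteq A$ with index $|A:N| \leqslant n!$ gives $|N : U \cap N| < f(n!)$. Finally one converts this bound on $|N : U\cap N|$ back to a bound on $|A:U|$: since $|A:U| = |A:N|\cdot|N : U\cap N|\cdot|U\cap N : U\cap N|^{-1}\cdots$ — more carefully, $|A:U| = |A:N| \cdot |N:N\cap U| \cdot |N U : U|^{-1} \cdot \cdots$; the clean way is $|A:U| \leqslant |A:N| \cdot |N : N \cap U|$ is false in general, so instead use $|A : U| = |A : NU|\,|NU : U| = |A:NU|\,|N : N\cap U| \leqslant |A:N|\cdot |N : N\cap U| < n! \cdot f(n!)$, and set $g(n) := n!\, f(n!) + 1$ (made increasing by taking a max over smaller arguments if necessary).

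The main obstacle is verifying that $P_A(U \cap N) = P_A(N)$, i.e.\ that restricting to the core does not destroy the covering property. This needs an argument of the following shape: let $y \in N$ have prime power order; since $P_A(U) = P_A(G)$ and $y \in N \leqslant G$, there is $a \in A$ with $y^a \in U$, and of course $y^a \in N$ since $N$ is normal, so $y^a \in U \cap N$ — wait, this shows directly that $U \cap N$ meets $y^A$. So in fact the verification \emph{is} short: every prime-power-order $y \in N$ is $A$-conjugate into $U$ (as $P_A(U)=P_A(G)\supseteq$ the classes meeting $N$), and the conjugate lies in $N$ by normality of $N$, hence lies in $U \cap N$. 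Thus $P_A(U\cap N) = P_A(N)$ holds automatically, and the genuinely nontrivial input is simply the bound $|A:N| \leqslant |A:G|!$ from the coset action together with Conjecture~\ref{conj2} itself. I would therefore expect the write-up to be quite short, with the only care needed being the index bookkeeping $|A:U| = |A:NU|\cdot|N:N\cap U|$ and the choice of $g$ in terms of $f$; I should also double-check the degenerate cases (e.g.\ $N$ trivial, or $U \cap N$ already equal to $N$) cause no problems, which they do not since then $|A:U| \leqslant |A:N| \leqslant n!$ directly.
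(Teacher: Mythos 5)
Your proposal is correct and follows essentially the same route as the paper: reduce to the normal case via $N=\Core_A(G)$, use the coset action to get $|A:N|\leqslant n!$, verify $P_A(U\cap N)=P_A(N)$ exactly as you do, and conclude $|A:U|=|A:NU|\cdot|N:N\cap U|\leqslant n!\,f(n!)$, which is the paper's bound and function. The momentary hesitation about whether $|A:U|\leqslant |A:N|\cdot|N:N\cap U|$ holds is harmless, since with $N\trianglelefteq A$ that inequality is valid and is precisely the chain the paper uses.
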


We prove Theorem~\ref{t:equiv} in Section~\ref{s:equiv}. 


Turning to Conjecture~\ref{conj2}, if $(A,G,U)$ is a triple satisfying the hypothesis of that conjecture, it is natural to examine the properties of the permutation group induced by the action of $G$ on the set of right cosets of $U$ in $G$. If $U$ is a maximal subgroup of $G$, then this permutation group is \emph{primitive}. In this paper, we settle positively this case of the conjecture:

\begin{theorem}\label{t:main}
   There is an increasing integer function $f$ such that, if $A$ is a finite group with normal subgroup $G$ of index $n$, and if $U$ is a maximal subgroup of $G$ with $P_A(U)=P_A(G)$, then $|G : U| < f(n)$. 
\end{theorem}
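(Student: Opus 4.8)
The plan is to analyze the primitive permutation group $\bar G$ induced by $G$ on the coset space $\Omega = [G:U]$, together with the (possibly larger) group $\bar A$ induced by $N_A(U)\ldots$ — more carefully, since $G\trianglelefteq A$ but $U$ need not be normalized by $A$, one works with the action of $G$ on $\Omega$ and tracks how $A$ permutes the $G$-classes. The hypothesis $P_A(U)=P_A(G)$ says exactly that every element of $G$ of prime power order is $A$-conjugate into $U$, i.e. into a point stabilizer of some $G$-translate of the action; equivalently, $G$ has no element of prime power order whose entire $A$-class consists of derangements on $\Omega$. So the engine of the proof will be: \emph{if $|G:U|$ is large, produce an element of $G$ of prime power order that is a derangement on $\Omega$ and remains a derangement after conjugating by all of $A$}. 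Since $|A:G|=n$, the $A$-class of $x\in G$ is a union of at most $n$ of the $G$-classes $x^G, x^{G}_2,\dots$; so it suffices to find $n$ elements $x_1,\dots,x_n$ of prime power order, one in each relevant $G$-class, that are \emph{simultaneously} derangements. This reduces matters to a purely primitive-group statement with a parameter $n$.

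The main step is to invoke the O'Nan--Scott classification of the primitive group $\bar G$ and handle the types in turn, using the Fein--Kantor--Schacher theorem (and its refinements, the Guralnick--Saxl Theorem \ref{t:guralsaxl} and \cite[Theorem A]{G90}) as the base case $n=1$. For almost simple type, $\bar G$ has socle a nonabelian simple group $T$; the Guralnick--Saxl Theorem already gives that $T$ itself has many classes of prime-power-order derangements, and the new simple-group bound advertised in the abstract — bounding $|T|$ in terms of the number of classes of elements of prime power order — lets one say that there are \emph{at least $n+1$} distinct $\bar G$-classes of prime-power-order derangements once $|T|$ (hence $|G:U|$) exceeds a function of $n$; pulling these back to $G$ gives $n+1$ elements whose $G$-classes are distinct, so their union cannot be swallowed by fewer than $n+1$ $A$-classes, forcing one $A$-class to consist of derangements. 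For product action and diagonal types one reduces to the almost simple case coordinatewise: a derangement in one coordinate of a suitable form (e.g. acting with no fixed point on one simple factor while being controlled elsewhere) yields a global derangement of prime power order, and the count multiplies up. For affine type, $\bar G\le \AGL_d(p)$ with $\bar G_0\le \GL_d(p)$ irreducible: here one wants a translation by a nonzero vector, or more generally an element of $p$-power order with no eigenvalue $1$ on the relevant module, and the number of such classes of derangements again grows with $p^d=|G:U|$; this case needs care because $\bar G_0$ can be small.

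The key technical ingredient, and I expect the main obstacle, is the \emph{quantitative} simple-group estimate: a bound of the shape $|T|\le h(k)$ where $k$ is the number of conjugacy classes of elements of prime power order in $T$ (or in an almost simple group with socle $T$). With such a bound in hand, "large index" translates into "many classes of prime-power-order derangements", which is what defeats any attempt to cover them with only $n$ $A$-classes. Proving this estimate requires the Classification: for alternating groups one counts partitions into parts that are prime powers (or cycle types that are single prime powers after adjusting for fixed points) and shows this grows; for groups of Lie type one exhibits many semisimple or unipotent classes of prime-power order — e.g. regular unipotent elements, and elements in maximal tori of order dividing $\Phi_e(q)$ for various cyclotomic factors — and shows the number of such classes is unbounded as $|T|\to\infty$ within each family; the sporadic groups are a finite check. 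The second obstacle is bookkeeping in the O'Nan--Scott reduction: ensuring that a derangement produced for $\bar G$ lifts to an element of $G$ of prime power order (not merely of $\bar G$), which is automatic since the kernel of $G\to\bar G$ is $\Core_G(U)$ and one may choose preimages of $p$-power order by Sylow theory, and ensuring that the $n$ chosen elements genuinely lie in $n+1$ distinct $G$-classes rather than fusing. Once these are in place, contraposition gives a function $f$ with $|G:U|\ge f(n)\Rightarrow P_A(U)\ne P_A(G)$, which is the theorem.
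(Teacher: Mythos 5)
The engine of your argument does not work as stated. To contradict $P_A(U)=P_A(G)$ you must exhibit a prime-power-order element $x\in G$ such that \emph{every} $G$-class contained in $x^A$ misses $U$; but producing $n+1$ (or any number of) pairwise distinct $G$-classes of derangements does not achieve this, because each such derangement class can separately be $A$-fused to some class that meets $U$. The correct bookkeeping would require the number of prime-power derangement classes to exceed $(n-1)$ times the number of prime-power classes meeting $U$, and the latter grows with $|U|$, which is not controlled by $n$ -- so "many derangement classes once $|T|$ is large" cannot close the argument on its own. The quantitative claims feeding into this count are also false in general: in the affine case with $G_0$ transitive on nonzero vectors (e.g.\ $A=\AGL_1(p^d)$, $G=C_p^d$, $U$ a subspace, as in Example 1 of Section~\ref{s:examples}) there is exactly \emph{one} class of prime-power-order derangements however large $p^d$ is, and the true bound there ($|G:U|\leqslant n$, Lemma~\ref{l:affine}) comes from a different mechanism, namely that the elementary abelian socle is covered by the $n$ pointwise stabilisers $S_{(\Omega_i)}$ of the $G$-orbits on $[A:U]$.

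The paper's route is genuinely different and you would need its two key mechanisms. First, the prime-power-covering hypothesis is shown to descend to a simple direct factor $T_1$ of a minimal normal subgroup of $G$ together with the action of $N_A(T_1)$; by the Guralnick--Saxl Theorem~\ref{t:guralsaxl} this \emph{rules out entirely} the case where $G^{[G:U]}$ has a unique nonabelian plinth (which covers your almost simple, product action and most diagonal/twisted wreath cases) -- no counting of derangements is needed or possible there (Theorem~\ref{t:GmustbeHSHC}). Second, the only case where the bound $|T|\leqslant h(m(T))$ of Theorem~\ref{t:mT} enters is when $G^{[G:U]}$ has two plinths, so that $U\cap\Soc(G)$ is a diagonal subgroup linking two minimal normal subgroups $S_1\cong S_2\cong T^s$ of $G$; there one bounds $s\cdot(m(T)-1)\leqslant n$ by a graph-colouring argument on the set of minimal normal subgroups (Lemma~\ref{l:2min2}), using that for every $p$-element of $\Soc(G)$ some pair of coordinates must be $\Aut(T^s)$-conjugate, and only then does $|G:U|=|T|^s\leqslant h(n+1)^n$ follow. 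Your proposal is missing both the descent-to-$T_1$ step and any replacement for this colouring argument, and its central counting step is invalid, so the gap is essential rather than a matter of detail.
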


Theorem~\ref{t:main} is obtained as a particular case of a more general result; Theorem~\ref{t:mainIT}. In the hypothesis of Theorem~\ref{t:mainIT}, we only require that the action of $G$ on the set of right cosets of $U$ in $G$ is \emph{innately transitive}. In fact, we find that if this action of $G$   is innately transitive and  $U$ is an $A$-prime-power-covering subgroup of $G$, then $U$ \emph{must} be a maximal subgroup   (see Remark~\ref{rem:mustbeprim}).
Our approach to 
Conjecture~\ref{conj2}
recasts the problem as one concerning permutation actions of $A$ and $G$ (see Lemma~\ref{l:cortriv}) and Theorems~\ref{t:main} and \ref{t:mainIT} treat the cases where these actions are primitive and innately transitive, respectively. The  general version of Conjecture~\ref{conj2} is equivalent to a problem about transitive permutation groups. We can show that the prime-power-covering property is inherited by certain sections of $G$, which play the role of the socle in a corresponding quotient action of the group $A$. Theorem~\ref{t:mainIT} and the examples in Section~\ref{s:examples} show that structure of the triples $(A,G,U)$ with $U$ an  $A$-prime-power-covering subgroup of $G$ is quite constrained. This structure will be explored further by the authors in forthcoming work.

\begin{remark}\label{r:main}
   The proof  of Theorem~\ref{t:main} is given in Section~\ref{s:main}. It makes essential use both of the Guralnick--Saxl Theorem and also a new result \cite[Theorem 1.1]{GMP24}  on simple group orders, which we discuss in Section~\ref{ss:simple groups}.  We show that the function $f(n) = h(n+1)^{n}$ suffices, where $h$ is the increasing integer function in \cite[Theorem 1.1]{GMP24} (see Theorem~\ref{t:mT}).  We note that the third family of examples given in Section~\ref{s:examples} suggests how Theorem~\ref{t:mT} might be relevant for the proof of Theorem~\ref{t:main}.
\end{remark}


\subsection{Acknowledgements} 
 This research was supported by the Australian Research Council Discovery Project grant DP230101268.  

\section{Preliminaries}\label{s:prelim}

\subsection{Simple groups}
\label{ss:simple groups}

For convenience, we state the Guralnick--Saxl Theorem \cite[Theorem 2.1]{G90} as it is critical to the proof of Theorem~\ref{t:GmustbeHSHC}.

\begin{theorem} [Guralnick--Saxl Theorem]\label{t:guralsaxl}
   Let $T$ be a finite nonabelian simple group, and let $T=\Inn(T)\leqslant A\leqslant \Aut(T)$. Then $T$ has no proper subgroup $U$ such that $P_A(U)=P_A(T)$. 
\end{theorem}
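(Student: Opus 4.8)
The plan is to reduce the statement to its sharpest instance and then verify that instance by a case analysis over the Classification of the Finite Simple Groups, exhibiting in each case an explicit $\Aut(T)$-class of prime-power order that $U$ cannot meet. Two reductions come first. Since $A\le\Aut(T)$, every $\Aut(T)$-class is a union of $A$-classes, so if some $\Aut(T)$-class of prime-power order is disjoint from $U$ then so is each of the $A$-classes it contains, and hence $P_A(U)\ne P_A(T)$; thus it suffices to treat $A=\Aut(T)$. Secondly, if $U<T$ is proper and $P_{\Aut(T)}(U)=P_{\Aut(T)}(T)$, pick a maximal subgroup $M$ with $U\le M<T$; then $\PP(U)\subseteq\PP(M)$ gives $P_{\Aut(T)}(T)=P_{\Aut(T)}(U)\subseteq P_{\Aut(T)}(M)\subseteq P_{\Aut(T)}(T)$, so $P_{\Aut(T)}(M)=P_{\Aut(T)}(T)$. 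Hence it is enough to prove: \emph{for every nonabelian finite simple group $T$ and every maximal subgroup $M<T$, there is an element $x\in T$ of prime-power order with $x^{\Aut(T)}\cap M=\emptyset$.} Note that Fein--Kantor--Schacher \cite[Theorem 1]{FKS}, applied to $T$ acting on the cosets of $M$, only supplies an element of prime-power order not $T$-conjugate into $M$; upgrading this to not $\Aut(T)$-conjugate into $M$ is exactly what forces the detailed analysis.

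The first tool is a prime-divisor principle: if some prime $r$ divides $|T|$ but not $|M|$, then any $x\in T$ of order $r$ has $|x|=r$ a prime power and $x^{\Aut(T)}\subseteq T$ is disjoint from $M$. Such an $r$ exists for all but a short list of maximal subgroups $M$, essentially those of order divisible by every prime dividing $|T|$. For $T=A_n$ one obtains $r$ from Bertrand's postulate (a prime in $(n/2,n]$), and the surviving $M$ are, roughly, the point stabiliser and a few intransitive and imprimitive subgroups. For $T$ of Lie type over $\mathbb{F}_q$ of characteristic $p$ one obtains $r$ as a primitive prime divisor of $q^e-1$ for a well-chosen rank-dependent $e$ (Zsigmondy's theorem); this already eliminates the parabolic subgroups (which contain no element of order $r$), and the surviving $M$ form a bounded list of geometric subgroups --- of extension-field, imprimitive or classical type --- together with finitely many almost simple subgroups, as delimited by Aschbacher's theorem and its exceptional-group analogues. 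For sporadic $T$ the question reduces to an \textsc{Atlas} comparison of element orders with maximal subgroup orders.

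For the residual maximal subgroups I would exhibit tailored prime-power elements. A fixed-point-free element of prime-power order --- one exists by \cite[Theorem 1]{FKS} for the relevant natural action --- automatically avoids a point stabiliser, since its $\Aut(T)$-class consists of elements with no fixed point; this deals with $A_{n-1}\le A_n$ and the analogous subspace stabilisers. For the remaining Lie-type subgroups one chooses a specific semisimple element, typically of small (often squarefree) order generating a suitable torus, whose centraliser in $T$ is small enough that the list of maximal subgroups of $T$ containing an $\Aut(T)$-conjugate of it --- controlled by the classification of subgroups of classical and exceptional groups --- does not include $M$. Throughout one must account for the diagonal, field and graph automorphisms acting on these classes (for instance the graph automorphism of $\PSL_n(q)$ interchanging the actions on $k$- and $(n-k)$-subspaces), so that one sometimes needs the simultaneous avoidance of a dual pair of geometric subgroups; but the chosen elements are concrete enough that this bookkeeping is routine, and the finitely many genuinely small groups (small $n$ for $A_n$, and small-rank or small-characteristic groups of Lie type where Zsigmondy fails, including $A_6\cong\PSL_2(9)$) are settled by direct computation.

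The main obstacle is exactly this residual analysis for the classical and exceptional groups of Lie type: one must produce, uniformly in the rank and across all untwisted and twisted families, prime-power order elements whose entire $\Aut(T)$-class avoids each of the large maximal subgroups of order divisible by all primes dividing $|T|$, while separately dispatching the finitely many small-rank and small-characteristic exceptions by hand. As with Fein--Kantor--Schacher \cite[Theorem 1]{FKS} and Saxl's theorem \cite[Proposition 2]{Saxl} --- the non-prime-power and the $\Aut(T)$-covering prototypes whose proofs the required arguments adapt --- the whole argument rests on the Classification of the Finite Simple Groups, here for the list of maximal subgroups and their orders.
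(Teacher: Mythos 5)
The paper does not prove Theorem~\ref{t:guralsaxl} at all: it is imported verbatim from Guralnick \cite[Theorem 2.1]{G90} (stated ``for convenience''), so the treatment expected here is a citation, and any self-contained proof has to reproduce Guralnick's CFSG analysis. Your two reductions are correct and essentially cost-free: passing from $T\leqslant A\leqslant\Aut(T)$ to $A=\Aut(T)$, and from an arbitrary proper $U$ to a maximal $M$, and the target you isolate --- for every maximal $M<T$ there is a prime-power element $x\in T$ with $x^{\Aut(T)}\cap M=\emptyset$ --- is indeed equivalent to the theorem and is what Guralnick proves, building on \cite{FKS} and \cite{Saxl}. The prime-divisor principle and the use of Zsigmondy primes against parabolic and small maximal subgroups are likewise the standard and correct opening moves.

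What follows, however, is a plan rather than a proof. The entire mathematical content of the theorem is the residual analysis you defer: exhibiting, for each family of simple groups and for each maximal subgroup whose order is divisible by every prime dividing $|T|$ (subfield subgroups, $\mathrm{O}^{\pm}_n(q)<\Sp_n(q)$ with $q$ even, the intransitive and imprimitive subgroups of $\Alt(n)$, the large almost simple subgroups of exceptional groups, the sporadic cases), an explicit prime-power class avoided under the whole of $\Aut(T)$; declaring this ``routine bookkeeping'' or ``settled by direct computation'' leaves the theorem unproved, and these are precisely the cases where single-class coverings nearly occur. Moreover, one step is wrong as stated: a prime-power derangement for the action of $T$ on $[T:M]$, supplied by \cite[Theorem 1]{FKS}, has an $\Aut(T)$-class consisting of derangements only when the $T$-conjugacy class of $M$ is $\Aut(T)$-invariant; this fails in exactly the delicate situations (the exceptional automorphism of $\Alt(6)$, the graph automorphism of $\PSL_n(q)$ interchanging $k$- and $(n-k)$-subspace stabilisers, $\Sp_4(2^f)$, and so on), so the ``automatic'' avoidance of point and subspace stabilisers needs a genuine argument there, which you flag but do not supply. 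In summary: correct reductions and an accurate description of the shape of Guralnick's argument, but not a proof; within this paper the right move is simply to quote \cite[Theorem 2.1]{G90}.
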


The new order bound for simple groups from \cite[Theorem 1.1]{GMP24} which we will apply in the proof of Theorem~\ref{t:main} involves the following group theoretic parameter.
For a finite group $G$, a subgroup $A\leqslant \Aut(G)$, and an element $g\in G$, the \emph{$A$-class}  of $g$ is the set $g^A=\{ g^\sigma\mid\sigma\in A\}$. If $A$ contains the inner automorphism group $\Inn(G)$ then $g^A$ is a union of $G$-conjugacy classes. Let 
\begin{equation}\label{mT}
m(G)=\max_{\mbox{primes}\ p} \#\{ \mbox{$\Aut(G)$-classes of elements of $p$-power order in $G$}\}.
\end{equation}

\begin{thm}\label{t:mT}(\cite[Theorem $1.1$]{GMP24})
There exists an increasing function $h$ on the natural numbers such that, for a finite nonabelian simple group $T$, the order of $T$ is at most $h(m(T))$.
\end{thm}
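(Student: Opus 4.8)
The plan is to reduce the statement to a finiteness assertion and then argue by the type of $T$ via the Classification of Finite Simple Groups. First, note that the existence of an increasing $h$ with $|T|\le h(m(T))$ for every finite nonabelian simple group $T$ is equivalent to: for each $k\in\mathbb{N}$, only finitely many such $T$ satisfy $m(T)\le k$. Indeed, given the latter, let $M_k$ denote the maximum order of a finite nonabelian simple group $T$ with $m(T)\le k$ (and $M_k=1$ if there is none); since these sets of groups are nested, $M_k$ is nondecreasing in $k$, and setting $h(k)=M_k+k$ gives an increasing function with $|T|\le M_{m(T)}\le h(m(T))$. Thus it suffices to prove that $m(T)\to\infty$ along any infinite sequence of pairwise non-isomorphic finite nonabelian simple groups.

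The sporadic groups form a finite set and may be ignored. If $T=\Alt(n)$ with $n\ge 7$, then $\Aut(T)=\Sym(n)$, so the $\Aut(T)$-classes of elements of $T$ of $2$-power order correspond bijectively to the partitions of $n$ all of whose parts are powers of $2$ that yield even permutations; already the partitions into parts $1$ and $2$ having an even number of $2$'s give more than $\lfloor n/4\rfloor$ of these, so $m(\Alt(n))\ge\lfloor n/4\rfloor\to\infty$. Now suppose $T$ is of Lie type over $\mathbb{F}_q$ with $q=p^f$ and untwisted rank $r$; along an infinite family, $r\to\infty$ or $q\to\infty$. If $r\to\infty$, so $T$ is classical, I would count unipotent elements, that is, elements of $p$-power order: the geometric unipotent classes are parametrised by partitions of $n$, $2n$ or $2n+1$ (with the multiplicity restrictions appropriate to the type), so their number grows with $r$; the combinatorial type of such an element is preserved by all field and graph automorphisms, and the diagonal automorphisms act transitively on the finitely many classes of $T$ into which a single geometric class splits, so the number of $\Aut(T)$-classes of unipotent elements tends to infinity with $r$, whence $m(T)\to\infty$.

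The remaining and principal case is $T$ of Lie type of \emph{bounded} rank $r$ — all exceptional types together with classical groups of bounded rank — as $q\to\infty$. Here the Weyl group of $T$ and the diagonal and graph parts of $\Out(T)$ have order bounded in terms of $r$, but $\Out(T)$ contains a cyclic group of field automorphisms of order $f$, which is unbounded. I would fix a cyclotomic index $d$, bounded in terms of $r$, such that $T$ has a cyclic maximal torus $S$ with $\Phi_d(q)\mid|S|$ and $|N_T(S):S|$ bounded, choose a prime $\ell\mid\Phi_d(q)$ for which $\ell^a:=|S|_\ell$ is as large as possible, and estimate the number of $\Aut(T)$-classes of elements of the cyclic subgroup $\Omega\le S$ of order $\ell^a$. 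After composing with a suitable inner automorphism, a field automorphism may be assumed to normalise $S$, and then, restricted to the cyclic characteristic subgroup $\Omega$, it is a power automorphism $\omega\mapsto\omega^k$; since a field automorphism has order $f$, the group it generates on $\Omega$ has order dividing $\gcd(\varphi(\ell^a),f)$. Combining this with the bounded fusion coming from $N_T(S)/S$ and from the diagonal and graph automorphisms, one obtains that $m(T)$ is at least
\[
\frac{1}{c(r)}\sum_{j=1}^{a}\frac{\varphi(\ell^{j})}{\gcd(\varphi(\ell^{j}),f)}
\]
for some constant $c(r)$ depending only on $r$.

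The hard part is to show that this lower bound tends to infinity as $q\to\infty$. Morally it reduces to the assertion that, for some cyclotomic index $d$ bounded in terms of $r$, the largest prime-power divisor of $\Phi_d(q)$ grows fast enough relative to $f=\log_p q$; this does \emph{not} follow from the trivial observation that $\Phi_d(q)$ has only $O(\log q/\log\log q)$ prime divisors, which merely yields a largest prime-power divisor of size $(\log q)^{1-o(1)}$. What is needed instead is genuine number theory — St{\o}rmer-type finiteness results on pairs of smooth numbers (for the regime with $f$ bounded and $p\to\infty$), and Stewart-type lower bounds for the largest prime divisor of $a^{n}-1$ (for fixed $p$ and $f\to\infty$), or an elementary substitute sufficient only to force the displayed sum to be unbounded in $q$ (recall that $h$ need not be explicit or even reasonable). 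Granting such an estimate for each family of bounded rank, one gets $m(T)\to\infty$ in this case as well, and together with the alternating and unbounded-rank cases this proves $m(T)\to\infty$ along every infinite family, which, as explained at the outset, is equivalent to the existence of the required increasing function $h$.
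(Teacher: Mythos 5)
You cannot be checked against an internal argument here: the paper does not prove Theorem~\ref{t:mT} at all, but quotes it from the companion preprint \cite{GMP24}, so your proposal can only be judged on its own terms. On those terms, the overall architecture is reasonable: the reduction to ``only finitely many nonabelian simple $T$ with $m(T)\le k$'' is correct, the alternating case via cycle types of $2$-elements is fine, and the unbounded-rank classical case via unipotent (geometric) classes works --- though your claim that diagonal automorphisms act transitively on the $T$-classes inside a geometric unipotent class is false in general (for classical groups a class can split into $2^k$ pieces with $k$ growing, while the diagonal outer group has bounded order); this is harmless only because the count of geometric classes, which are permuted by $\Aut(T)$ with graph automorphisms of bounded order, already gives the lower bound you need.

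The genuine gap is in the bounded-rank case, which is where all the content of the theorem lies, and your proposal explicitly stops at it (``Granting such an estimate\dots''). Your own accounting shows why this cannot be waved through: in bounded rank the unipotent classes contribute only $O_r(1)$ classes, a single Zsygmondy prime $\ell\equiv 1\pmod{df}$ taken to the first power contributes only $O_r(1)$ classes after dividing by the field-automorphism fusion of order up to $f$, and the trivial bound (largest prime-power divisor of $N$ is at least $c\log N/\log\log N$) lands just below the threshold $f$ when $p$ is bounded and $f\to\infty$. So the displayed sum being unbounded is essentially \emph{equivalent} to the number-theoretic assertion that, for some $d$ bounded in terms of $r$, the largest prime-power divisor of $\Phi_d(q)$ grows faster than every constant multiple of $f=\log_p q$; nothing in your write-up proves this, nor even states precisely which known result is to be invoked in which regime. (Your regime attribution is also off: for $f$ bounded and $p\to\infty$ the trivial $\log N/\log\log N$ bound already suffices and no St{\o}rmer-type input is needed; the hard regime is $p$ bounded, or $p$ growing at most polynomially in $f$, with $f\to\infty$, where something of the strength of Stewart's lower bound for the greatest prime factor of $a^n-1$ --- itself resting on $p$-adic linear forms in logarithms --- is the only tool you name, and its uniformity in $p$ would still have to be checked.) As it stands, the central case of the theorem is assumed rather than proved, so the proposal is incomplete.
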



\subsection{Finite innately transitive permutation groups}
\label{sec:IT gps}


A finite subgroup $G\leqslant \Sym(\Omega)$ is \emph{innately transitive} if there exists a transitive minimal normal subgroup. In particular, each finite primitive group is innately transitive. This class of permutation groups was first introduced in \cite{BP} and a complete structure theory was developed, similar in spirit to the O'Nan-Scott Theorem for primitive groups. A transitive minimal normal subgroup of $G$ is called a \emph{plinth}, and an innately transitive group   can have at most two plinths (\cite[Lemma 5.1]{BP}). 
\begin{theorem}[{\cite[Corollary 3.13]{PS}}]
\label{t:case div of IT}
Let $G$ be a finite innately transitive group. If $M$ and $N$ are two distinct plinths of $G$,  then $M\cong N \cong T^k$ for some nonabelian simple group $T$ and positive integer $k$, both $M$ and $N$ are regular, and $G$ is primitive. If $G$ has a unique plinth that is abelian, then $G$ is primitive of affine type.
\end{theorem}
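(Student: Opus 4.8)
The plan is to argue directly at the level of permutation actions, using throughout the standard fact that, for a transitive group $G$ with a regular normal subgroup $R$ and point stabiliser $G_\omega$, the $G$-invariant partitions of $\Omega$ correspond to the subgroups of $R$ normalised by $G_\omega$ (the two trivial partitions corresponding to $1$ and $R$), so that $G$ is primitive exactly when $R$ admits no $G_\omega$-invariant subgroup other than $1$ and $R$.

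First I would record two structural facts. Suppose $M$ and $N$ are distinct minimal normal subgroups of $G$. Then $M\cap N$ is normal in $G$ and properly contained in $M$, so $M\cap N=1$ and hence $[M,N]=1$. Since the centraliser in $\Sym(\Omega)$ of a transitive group is semiregular, $N$ (and symmetrically $M$) is semiregular; being also transitive, each of $M$ and $N$ is regular, so $|M|=|N|=|\Omega|$. If in addition $M$ were abelian, then $\cent{\Sym(\Omega)}{M}$ is semiregular and contains the transitive group $M$, hence is regular of order $|\Omega|$, forcing $\cent{G}{M}=M$; but then $N\leqslant\cent{G}{M}=M$, contradicting $N\neq M$. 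So whenever $G$ has two distinct plinths both are nonabelian, and — being minimal normal — both are isomorphic to $T^{k}$ for a nonabelian simple group $T$ and some $k\geqslant 1$; moreover, identifying $\Omega$ with $N$ via the right regular representation, $\cent{\Sym(\Omega)}{N}$ is the (left‑regular) copy of $N$, and since $M\leqslant\cent{\Sym(\Omega)}{N}$ with $|M|=|N|$ we get $M=\cent{\Sym(\Omega)}{N}\cong N$. The same computation shows that whenever $G$ has an \emph{abelian} plinth $M$ one has $\cent{G}{M}=M$, so every minimal normal subgroup of $G$ lies in $M$ and hence equals $M$; thus such an $M$ is the unique minimal normal subgroup, i.e.\ $M=\Soc(G)$.

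Next I would prove primitivity in the two-plinth case. Here $M\times N$ is a transitive normal subgroup, so $G=(M\times N)G_{\omega}$, and the point stabiliser $(M\times N)_{\omega}$ has order $|\Omega|=|M|$ and meets each of $M$ and $N$ trivially. Writing $d=m_{d}n_{d}$ with $m_{d}\in M$, $n_{d}\in N$ for $d\in(M\times N)_{\omega}$, the projection $d\mapsto m_{d}$ is an isomorphism onto $M$, and since $n_{d}$ centralises $M$ the conjugation action of $d$ on $M$ agrees with that of $m_{d}$; hence $(M\times N)_{\omega}\leqslant G_{\omega}$ induces on $M$ exactly $\Inn(M)$. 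Therefore every $G_{\omega}$-invariant subgroup of $M$ is normal in $M\cong T^{k}$, hence is the product of a subset $I$ of the $k$ simple direct factors, with $I$ invariant under the permutation action of $G_{\omega}$ on these factors. But $G$ permutes the factors transitively — otherwise the product over a proper nonempty orbit would be a proper nontrivial normal subgroup of $G$ inside $M$ — and since $M$ and $N$ both act trivially on the factors (inner automorphisms preserve each factor, and $N$ centralises $M$), the image of $G_{\omega}$ in $\Sym(\{1,\dots,k\})$ equals that of $G$ and so is transitive. Hence the only invariant $I$ are $\emptyset$ and $\{1,\dots,k\}$, giving $G$ primitive. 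This step is where I expect the main obstacle: pinning down $(M\times N)_{\omega}$ as a diagonal-type complement inducing precisely $\Inn(M)$, and then transporting transitivity on the simple factors from $G$ down to $G_{\omega}$.

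Finally, for a unique abelian plinth $M$: such an $M$ is elementary abelian of some order $p^{d}$, regular, and self-centralising by the first step, so $G/M=G/\cent{G}{M}$ embeds in $\Aut(M)=\GL_d(p)$ and $G\leqslant M\rtimes\Aut(M)=\AGL_d(p)$ with $M$ the translation subgroup. Subgroups of $M$ are exactly $\mathbb{F}_{p}$-subspaces, and a proper nontrivial $G_{\omega}$-invariant one, being also normalised by the abelian group $M$, would be a proper nontrivial normal subgroup of $G=MG_{\omega}$ contained in $M$ — impossible. So $G_{\omega}$ acts irreducibly on $M$, equivalently $G$ is primitive; together with $\Soc(G)=M$ abelian this says $G$ is primitive of affine type. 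The remaining ingredients — the block/subgroup correspondence, the description of normal subgroups of $T^{k}$, and the identification of subgroups of an elementary abelian group with subspaces — are routine and require no appeal to the classification of finite simple groups.
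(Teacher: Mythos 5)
Your proposal is correct, but there is nothing in the paper to measure it against: the paper does not prove this statement, it imports it verbatim from \cite[Corollary 3.13]{PS}, relying on the structure theory developed there (and it cites neighbouring results of \cite{PS}, such as Theorems 3.2 and 3.6 and Corollary 3.11, for the finer facts it needs later). What you give is a self-contained, classification-free derivation along the standard lines, and every step checks out: distinct plinths intersect trivially and centralise each other, so each is semiregular and hence regular, and the centraliser in $\Sym(\Omega)$ of the regular group $N$ is its left-regular copy, which both identifies $M$ with that centraliser and gives $M\cong N\cong T^k$ with $T$ nonabelian; the correspondence between $G$-invariant partitions and $G_\omega$-invariant subgroups of the regular normal subgroup $M$, combined with your observation that the diagonal-type stabiliser $(M\times N)_\omega$ induces exactly $\Inn(M)$ (so invariant subgroups are normal in $M\cong T^k$, hence products of simple factors) and that transitivity of $G$ on the $k$ factors, forced by minimality of $M$, descends to $G_\omega$ because $M\times N$ acts trivially on the factor set, yields primitivity; and an abelian plinth is regular and self-centralising, hence the unique minimal normal subgroup, while any $G_\omega$-invariant subgroup of it is normalised by $G=MG_\omega$, so $G$ is primitive of affine type. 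The two background facts you quote (the partition/subgroup correspondence for a regular normal subgroup, and the description of normal subgroups of $T^k$) are standard and correctly applied, so your argument would serve as an independent proof of Theorem~\ref{t:case div of IT}; what the citation buys the paper instead is brevity and uniformity, since \cite{PS} is invoked anyway for closely related structural facts (double centralisers, regularity, diagonal point stabilisers) in the proofs of Lemmas~\ref{l:ITgps}, \ref{l:2min} and \ref{l:2min2}.
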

We require one further result on such groups, which we give below. Recall that if $K$ and $L$ are normal subgroups of a group $G$ with $L < K$, then  $K/L$ is called a a \emph{chief factor} of $G$ if $K/L$ is a minimal normal subgroup of $G/L$.
If $\pi : G \rightarrow H$ is a surjective group homomorphism, and $K/L$ is a chief factor of $G$, then $K^\pi/L^\pi$ is either a chief factor of $H$, or trivial. If $K^\pi/L^\pi \neq 1$, then $K/L \cong K^\pi/L^\pi$ and the actions of $G$ on $K/L$ and of $H$ on $K^\pi/L^\pi$ are equivalent (see \cite[Chapter 10]{Isaacs} for further details). 

\begin{lemma}\label{l:ITgps}
    Let $G \leqslant \Sym(\Omega)$ be an innately transitive group with plinth $M \cong T^s$. If $K/L$ is a chief factor of $G$ such that $K/L \cong T^s$, then $K$ contains a plinth of $G$. In particular, if $M$ is the unique plinth of $G$, then $M \leqslant K$.
\end{lemma}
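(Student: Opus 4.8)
The plan is to exploit the structure theory of innately transitive groups, specifically the dichotomy in Theorem~\ref{t:case div of IT}, together with the chief-factor machinery recalled just before the statement. Let $K/L$ be a chief factor of $G$ with $K/L \cong T^s$. First I would consider the quotient map $\pi : G \to G/L$. Since $M \cong T^s$ is a plinth of $G$, its image $M^\pi = ML/L$ is a transitive normal subgroup of $G^\pi = G/L$ acting on $\Omega$ (note $L$ acts trivially on $\Omega$ if $L$ is contained in a point stabiliser — but in general $G^\pi$ may not act faithfully, so I would instead work inside $G$ directly, comparing $K$ with $M$). The cleaner route: examine $M \cap K$ and $MK$ as normal subgroups of $G$, and use the fact that $K/L$ and $M$ are both chief factors of $G$ isomorphic to $T^s$.

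The key step is to show $M \leqslant K$ (when $M$ is the unique plinth) or, in the general case, that $K$ contains \emph{some} plinth. Here I would split according to whether $M$ is abelian or nonabelian. If $M$ is abelian, then $T$ is a cyclic group of prime order, $G$ is primitive of affine type with unique plinth $M$ (the socle), and any chief factor of $G$ isomorphic to $M \cong C_p^s$ must be a complemented or non-complemented section; since $M = \mathrm{Soc}(G)$ is the unique minimal normal subgroup, any chief factor $K/L \cong C_p^s$ with $L \neq 1$ would force $L \geqslant M$ by minimality — but then $|K/L| < |K| \leqslant |G|$ still has to equal $|M|$, and a counting/order argument on the affine structure pins down $M \leqslant K$. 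If $M$ is nonabelian, I would use that $M$ is a minimal normal subgroup: either $M \cap K = 1$ or $M \leqslant K$. In the first case $[M, K] \leqslant M \cap K = 1$, so $K \leqslant \mathrm{C}_G(M)$; but for an innately transitive group with nonabelian plinth $M$, the centraliser $\mathrm{C}_G(M)$ is either trivial or is the second plinth (by \cite[Lemma 5.1]{BP} and Theorem~\ref{t:case div of IT}), and in the latter case $K$ contains that second plinth — giving the conclusion — while if $\mathrm{C}_G(M) = 1$ we get $K = 1$, contradicting $K/L \cong T^s \neq 1$. In the second case $M \leqslant K$ directly, and $M$ is a plinth inside $K$.

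I expect the main obstacle to be the case where $M \cap K$ is a \emph{proper nontrivial} normal subgroup of $M$ — but this cannot happen when $M$ is a \emph{minimal} normal subgroup, which is exactly what a plinth is, so the dichotomy $M \cap K \in \{1, M\}$ is automatic. The genuinely delicate point is instead ruling out, in the nonabelian case with $\mathrm{C}_G(M) \neq 1$, that $K$ meets the second plinth $N$ in a proper subgroup: here I would invoke Theorem~\ref{t:case div of IT}, which says $N \cong T^k$ is itself minimal normal in $G$, so again $N \cap K \in \{1, N\}$, and $N \cap K = 1$ combined with $K \leqslant \mathrm{C}_G(M)$ and $\mathrm{C}_G(M) = N$ forces $K = 1$, a contradiction. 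Finally, for the "in particular" clause: if $M$ is the unique plinth, then $\mathrm{C}_G(M)$ contains no plinth, so the nonabelian $M \cap K = 1$ branch is impossible, leaving $M \leqslant K$; and in the abelian (affine) case the uniqueness of the plinth is already part of Theorem~\ref{t:case div of IT}, so the order argument above applies. Throughout I would use the standard fact (from \cite[Chapter 10]{Isaacs}, as cited) that $\pi$ maps the chief factor $K/L$ isomorphically onto a chief factor of $G/L$ preserving the $G$-action, to transfer between $G$ and its quotients where convenient.
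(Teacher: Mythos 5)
Your overall skeleton (use minimality of $M$ to split into $M\leqslant K$ versus $K\cap M=1$, then push $K$ into $\mathrm{C}_G(M)$) matches the paper's, but the nonabelian branch — the one the paper actually needs — rests on a false intermediate claim. You assert that for an innately transitive group with nonabelian plinth $M$, the centraliser $\mathrm{C}_G(M)$ is either trivial or a second plinth, citing \cite[Lemma 5.1]{BP} and Theorem~\ref{t:case div of IT}. Neither result says this, and it is not true: the defining feature of innately transitive groups that are not quasiprimitive is precisely that $\mathrm{C}_G(M)$ can be nontrivial and intransitive. For a concrete counterexample, let $T$ be nonabelian simple acting on $\Omega=T$ by right multiplication, let $1<C<T$ act by left multiplication, and set $G=T\times C\leqslant\Sym(\Omega)$; then $M=T$ is a transitive minimal normal subgroup (a plinth), while $\mathrm{C}_G(M)=C$ is nontrivial, intransitive, and in general not even minimal normal. (The dichotomy you want does hold for quasiprimitive groups, where every nontrivial normal subgroup is transitive, but that is exactly the hypothesis you do not have.) Consequently your argument says nothing in the case $K\cap M=1$ with $\mathrm{C}_G(M)\neq 1$ intransitive, and the same gap infects the ``in particular'' clause, since you rule out the $K\cap M=1$ branch only via this dichotomy.

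The missing ingredient is an order count, which is how the paper closes this case: since $M$ is transitive, $\mathrm{C}_G(M)$ is semiregular, so $|\mathrm{C}_G(M)|\leqslant|\Omega|\leqslant|M|$ (this is \cite[Theorem 3.2(i)]{PS}); combined with $K\leqslant \mathrm{C}_G(M)$ and $|K|\geqslant|K/L|=|T|^s=|M|$ this forces $L=1$ and $K=\mathrm{C}_G(M)$ with $|K|=|M|=|\Omega|$, whence $K$ is a minimal normal subgroup that is regular, i.e.\ a (second) plinth. Note the paper runs this single computation uniformly, with no abelian/nonabelian case split. Your abelian case does reach the right conclusion, but by a needlessly murky route: once $M$ is the unique minimal normal subgroup of the affine primitive group $G$, any nontrivial normal subgroup $K$ contains $M$ outright, and no ``counting/order argument on the affine structure'' is required (your discussion also omits the subcase $L=1$, where $K$ is minimal normal and hence equals $M$).
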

\begin{proof}
Let $K$ and $L$ be normal in $G$ and  such that $K/L \cong M = T^s$ is a chief factor of $G$. Since $M$ is a minimal normal subgroup of $G$, either $K \cap M = 1$ or $K \cap M = M$. The latter implies that $M \leqslant K$, and all assertions follow in this case. We may assume therefore that $K \cap M = 1$ and so $K$ centralises $M$, that is, $K \leqslant C_G(M)$. Since $|C_G(M)| \leqslant |M|$ (see \cite[Theorem 3.2(i)]{PS}) 
we have
$$|M| = |K/L| \leqslant |K| \leqslant |C_G(M)| \leqslant |M|.$$
In particular $L=1$,  so  $K$ is a minimal normal subgroup of $G$, and also $K=C_G(M)$.  Hence, \cite[Theorem 3.2]{PS} shows that $K$  is regular, and thus $K$ itself is a plinth. In particular, $K$ and $M$ are \emph{distinct} plinths.
\end{proof}

We will prove a  much stronger  result than Theorem~\ref{t:main} from which Theorem~\ref{t:main} follows as an immediate corollary -- for the same function $f$.

\begin{theorem}\label{t:mainIT}
   There is an increasing integer function $f$ such that, if $A$ is a finite group with normal subgroup $G$ of index $n$, and if $U$ is a proper subgroup of $G$ with $P_A(U)=P_A(G)$ and the $G$-action on $[G:U]:= \{ Ug \mid g \in G \}$ is innately transitive, then $|G : U| < f(n)$. 
\end{theorem}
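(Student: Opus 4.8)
The plan is to reduce Theorem~\ref{t:mainIT} to a statement about the plinth of the innately transitive permutation group $H$ induced by $G$ on $[G:U]$, and then to exploit the Guralnick--Saxl Theorem (Theorem~\ref{t:guralsaxl}) together with the new simple-group order bound (Theorem~\ref{t:mT}). First I would set up notation: let $\Omega = [G:U]$ with $|\Omega| = |G:U|$, let $K = \Core_G(U)$ be the kernel of the $G$-action on $\Omega$, and note that since $G \trianglelefteq A$ and $U$ meets every $A$-class of prime-power-order elements of $G$, the subgroup $K$ is normalised by $A$ (because $P_A(U) = P_A(G)$ is an $A$-invariant condition and $K = \Core_A(U) \cap G$-type arguments apply). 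One then passes to $\bar A = A/K$, $\bar G = G/K \cong H$, $\bar U = U/K$: the prime-power-covering property descends (images of prime-power-order elements have prime-power order, and $\bar A$-classes are images of $A$-classes), so $(\bar A, \bar G, \bar U)$ still satisfies the hypotheses, $\bar G$ acts faithfully and innately transitively on $\Omega$, and $|A:G| = |\bar A : \bar G| = n$ is unchanged. So without loss of generality $G = H$ acts faithfully and innately transitively on $\Omega$ with point stabiliser $U$.

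Next I would analyse the plinth $M \cong T^s$ of $G$. The key case split is whether $T$ is abelian or nonabelian. If $M$ is abelian then by Theorem~\ref{t:case div of IT} the group $G$ is affine, $M$ is regular and elementary abelian of order $|\Omega|$, and $|G:U| = |M| = p^d$ for a prime $p$; here one argues directly that $A$ normalising $G$ with $|A:G| = n$ forces $p^d$ to be bounded in terms of $n$, essentially because $U$ must meet the (single) $A$-class of nontrivial elements of $M$ of $p$-power order while $M \cap U = 1$, so $M$ consists of the identity together with elements $A$-conjugate into $U \cap G = U$ which has order coprime-ish constraints — more carefully, every nontrivial element of $M$ is $A$-conjugate to an element of $U$, and counting the $A$-conjugates of $U$ needed to cover $M \setminus \{1\}$ gives $|M| - 1 \le |A:U \cap \text{stab}| \cdot (\text{something})$, yielding the bound. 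If $M \cong T^s$ with $T$ nonabelian simple, then $U \cap M$ is a proper subgroup of $M$ (as $U$ is a proper subgroup and $M$ is transitive), and I would show using the chief factor transfer machinery (the paragraph before Lemma~\ref{l:ITgps}) that the $A$-action on the chief factor $M$ of $G$ — or rather on a suitable section — realises $M$ as (isomorphic to) a chief factor with $A/C_A(M)$ acting, and that $C_A(M) \cap G$-analysis via Lemma~\ref{l:ITgps} pins down $M$ inside the normal structure.

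The heart of the nonabelian case, and the main obstacle, is to bound $s$ and $|T|$ in terms of $n$. To bound $|T|$: the group $A$ permutes the $s$ simple direct factors of $M$, so $A/C_A(M)$ induces a subgroup of $\Aut(T) \wr \Sym(s)$, and in particular $A$ induces on each $T$-factor a group lying between $\Inn(T)$ and $\Aut(T)$ up to the permutation action. If $U \cap M$ meets every $A$-class of prime-power-order elements of $M$, then by projecting to a single factor (handling the $\Sym(s)$-permutation of factors carefully, possibly via an induction on $s$ or by considering the diagonal/product structure of $U \cap M$ as in the O'Nan--Scott analysis) one should be able to invoke Theorem~\ref{t:guralsaxl} to conclude that $U$ \emph{cannot} meet every such class unless the number of factors permuted nontrivially is controlled — concretely, I expect the argument shows that the number of "independent" $T$-factors is at most (a function of) the number of orbits of $A$ on the $s$ factors, hence at most $n$ or so, and then Theorem~\ref{t:mT} converts "$U$ meets all $m(T)$-many classes of $p$-power elements in $T$ while being proper" plus the $|A:G| = n$ constraint into a bound $|T| \le h(\text{poly}(n))$. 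Finally, $|G:U| = |M : U \cap M| \le |M| = |T|^s \le h(\cdot)^n$, which after bookkeeping gives the claimed shape $f(n) = h(n+1)^n$ as in Remark~\ref{r:main}. The delicate point I anticipate spending most effort on is the combinatorial reduction from the wreath-product action on the $s$ factors down to a single factor where Guralnick--Saxl applies: one must show that a proper $U$ which meets every $A$-class in $M$ forces a contradiction once $|T|$ or $s$ is too large relative to $n$, and this requires carefully tracking how the $A$-action fuses $G$-classes across the factors and how $U \cap M$ is distributed among them.
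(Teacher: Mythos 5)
Your reduction step contains a genuine error that undermines the rest of the plan. You set $K=\Core_G(U)$, the kernel of the $G$-action on $[G:U]$, and claim $K$ is normalised by $A$; this is false in general, since $K^a=\Core_G(U^a)$ for $a\in A$ and $A$ does not normalise $U$ (the covering hypothesis $P_A(U)=P_A(G)$ gives no such invariance). Consequently you cannot pass to $A/K$, and you cannot assume that $G$ acts faithfully on $[G:U]$. The paper instead quotients by $N=\Core_A(U)$ (Lemma~\ref{l:cortriv}), which is normal in $A$, and then works throughout with the faithful $A$-action on $\Omega=[A:U]$, on which $G$ has $n$ orbits $\Omega_1,\dots,\Omega_n$ permuted transitively by $A$, each $G^{\Omega_i}$ innately transitive but possibly unfaithful for $G$. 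This multi-orbit structure is not a technicality: it is exactly what produces the numerical bounds. In the affine case the count is $S=\bigcup_{i=1}^n S_{(\Omega_i)}$, giving $|S|\leqslant n\,|U\cap S|$ and hence $|G:U|\leqslant n$ (Lemma~\ref{l:affine}); your version, covering $M\setminus\{1\}$ by $A$-conjugates of $U$, does not yield a bound in terms of $n$, because the number of $A$-conjugates of $U$ is not controlled by $n$ and the conjugates need not meet $M$ in a subgroup (note also that in the paper's setting $U\cap S\neq 1$ in general).

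In the nonabelian case your sketch stops at the decisive point. The paper's argument splits into two genuinely different situations: if each $G^{\Omega_i}$ has a \emph{unique} nonabelian plinth, then a careful descent through minimal normal subgroups $S_i$ of $G$ projecting onto these plinths (Lemma~\ref{l:soc}), followed by restriction to a single simple direct factor $T_1$ with $N_A(T_1)$ acting, contradicts the Guralnick--Saxl Theorem outright (Theorem~\ref{t:GmustbeHSHC}) -- so this case simply cannot occur; if instead each $G^{\Omega_i}$ has two plinths, then $|G:U|=|T|^s$ exactly, $U\cap S$ involves a diagonal subgroup of $S_1\times S_2$, and the bound $s\cdot(m(T)-1)\leqslant n$ is obtained by a graph-colouring argument (Lemma~\ref{l:2min2}): the graph on the minimal normal subgroups of $G$ whose edges record which pairs realise $\Soc(G^{\Omega_k})$ has at most $n$ edges, and a proper colouring obstruction bounds the number of $\Aut(T^s)$-classes of $p$-elements. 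Your proposed substitute -- that ``the number of independent $T$-factors is at most the number of $A$-orbits on the $s$ factors, hence at most $n$ or so'' -- is not established and is not the mechanism that works; the number of $A$-orbits on simple factors is not obviously bounded by $n$, and without the $n$-orbit decomposition of $[A:U]$ (which your faulty reduction discarded) there is no source for the inequality against $n$. Only after $s$ and $m(T)$ are bounded by $n$ does Theorem~\ref{t:mT} give $|G:U|=|T|^s\leqslant h(n+1)^n$, the shape you correctly anticipated.
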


\begin{remark}
    \label{rem:mustbeprim}
    One ingredient in the proof of Theorem~\ref{t:mainIT} is Theorem~\ref{t:GmustbeHSHC}, proved in Section~\ref{sec:unique plinth}. The latter theorem essentially says that the innately transitive action of $G$ on the set of right cosets of $U$ either has two plinths (and so is primitive by Theorem~\ref{t:case div of IT}), or has a unique plinth which is abelian and is thus a primitive group of affine type. In particular,  this says that if the hypothesis of Theorem~\ref{t:mainIT} is satisfied, then $U$ is a maximal subgroup of $G$.
\end{remark}

\subsection{Vertex  colourings of graphs}

In the proof of Theorem~\ref{t:main} we use some tools from graph theory. For a graph $\Gamma = (V, E)$ with vertex set $V$ and edge set $E$, we define the \emph{degree} of a vertex to be the number of edges incident with it. The \emph{maximum degree} of $\Gamma$ is then the maximum of the degrees of all vertices of $\Gamma$.
A \emph{proper vertex colouring} of a graph $\Gamma=(V,E)$ by a set $\mathcal X$ of colours is a function $f  :V \rightarrow \mathcal X$ such that for any edge $\{u,v\} \in E$ we have $f(u) \neq f(v)$. If $|\mathcal X| = k$, we say that $f$ is a \emph{proper vertex $k$-colouring}; the \emph{chromatic number} of $\Gamma$ is the least integer $k$ such that there exists a  proper vertex $k$-colouring. The chromatic number of a  graph may be bounded in terms of various structural properties.  We  will employ the following upper bound, the proof of which is a standard illustration of the `greedy algorithm'. For a statement see \cite[5.1.2, F13, pg.~345]{HoGT}.

\begin{thm}
    \label{thm:colouring}
   Each finite  graph $\Gamma$ with maximum degree $d$ admits a proper vertex $(d+1)$-colouring.
\end{thm}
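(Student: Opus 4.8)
The final statement is the classical greedy bound: a finite graph $\Gamma$ with maximum degree $d$ has chromatic number at most $d+1$. I would prove this by induction on the number of vertices, exhibiting an explicit proper $(d+1)$-colouring rather than merely citing the reference. The base case $|V| = 0$ (or $|V|=1$) is immediate. For the inductive step, the key observation is that removing a vertex cannot increase the maximum degree, so we can colour the remaining graph by induction and then extend.

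\textbf{Key steps.} First, fix a set $\mathcal{X}$ of colours with $|\mathcal{X}| = d+1$, and let $\Gamma = (V,E)$ have maximum degree $d$ with $|V| = N \geqslant 1$. Pick any vertex $v \in V$ and form the induced subgraph $\Gamma' = \Gamma - v$ on vertex set $V \setminus \{v\}$, keeping exactly those edges of $\Gamma$ not incident with $v$. Second, note that every vertex of $\Gamma'$ has degree at most its degree in $\Gamma$, hence the maximum degree of $\Gamma'$ is at most $d$; by the inductive hypothesis $\Gamma'$ admits a proper vertex $(d+1)$-colouring $f' : V \setminus \{v\} \to \mathcal{X}$. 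Third, extend $f'$ to $v$: the neighbours of $v$ in $\Gamma$ number at most $d$ (since the degree of $v$ is at most $d$), so they receive at most $d$ distinct colours under $f'$; since $|\mathcal{X}| = d+1 > d$, there is a colour $c \in \mathcal{X}$ used by no neighbour of $v$, and we set $f(v) = c$ and $f(u) = f'(u)$ for $u \neq v$. Finally, one checks $f$ is proper: every edge of $\Gamma$ either lies in $\Gamma'$ (endpoints differently coloured by $f'$) or is incident with $v$ (the other endpoint has a colour $\neq c$ by the choice of $c$), completing the induction.

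\textbf{Main obstacle.} There is essentially no obstacle here — the argument is entirely routine, which is exactly why the paper cites it as ``a standard illustration of the `greedy algorithm'.'' The only point requiring a moment's care is the direction of the monotonicity used in the inductive step: one must delete a vertex (which can only decrease degrees, preserving the bound ``maximum degree $\leqslant d$'') rather than, say, delete an edge or pass to an arbitrary subgraph in a way that destroys the hypothesis. A secondary subtlety, were one to phrase the proof via a vertex ordering instead of induction, is that the greedy algorithm processes vertices in a fixed order and at each step the already-coloured neighbours number at most $d$, so a free colour always remains; the inductive formulation above makes this bookkeeping transparent and avoids any need to name an ordering explicitly.
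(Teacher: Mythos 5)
Your inductive argument is correct and complete, and it is precisely the standard greedy-algorithm proof that the paper alludes to (the paper itself gives no proof, citing the Handbook of Graph Theory instead). Nothing further is needed.
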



\section{Equivalence of the two conjectures}\label{s:equiv}

As discussed in Section~\ref{s:intro}, Conjecture~$\ref{conj2}$ is a special case of Conjecture~$\ref{conj1}$ and both conjectures are true in the case $n=1$. Thus to prove Theorem~\ref{t:equiv} we need to prove that Conjecture~$\ref{conj2}$ implies Conjecture~$\ref{conj1}$ for each $n\geqslant 2$.

\medskip\noindent
\emph{Proof of Theorem~\ref{t:equiv}.}\quad 
Suppose that Conjecture~$\ref{conj2}$ is true for an increasing integer function $f$, and let $A$ be a finite group with subgroups $U$ and $G$ such that $P_A(U)=P_A(G)$, and $|A:G|=n\geqslant 2$. Our task is to bound $|A:U|$ by an increasing function of $n$, so we may in particular assume that  $U$ is a proper subgroup of $A$.  

Let $N=\Core_A(G):=\cap_{a\in A} G^a$. Then the $A$-action by right multiplication on $[A:G]=\{ Ga\mid a\in A\}$  induces a homomorphism $\phi:A\to \Sym([A:G])$ with kernel $N$, and hence $n':=|A:N|\leqslant n!$. If $N=1$ then we have $|A:U|\leqslant |A|=n'\leqslant n!$. Thus we may assume that $N\ne 1$.

We claim that $P_A(N)= P_A(U\cap N)$. Since $N\unlhd A$, we have $x^A\subseteq N$ for each $x\in U\cap N$ of prime power order, and hence $P_A(U\cap N)\subseteq P_A(N)$. Conversely if $x\in P_A(N)$, then $x=y^a$ for some $y\in N$ of prime power order and some $a\in A$, and hence $x\in y^A\subseteq N\subseteq G$. Since $P_A(G)=P_A(U)$ it follows that $x = (y')^{a'}$ for some $y'\in U$ of prime power order, and some $a'\in A$. Thus $y'\in x^A\subseteq N$, and so $y'\in U\cap N$ and hence $x=(y')^{a'}\in P_A(U\cap N)$. Therefore $P_A(N)\subseteq P_A(U\cap N)$ and so equality holds, proving the claim.

Thus the hypotheses of Conjecture~\ref{conj2} hold for the group $A$ with normal subgroup $N$ of index $n'$ and subgroup $U\cap N$ of $N$; and since we are assuming that Conjecture~\ref{conj2} holds relative to the increasing integer function $f$, we have $|N:U\cap N|\leqslant f(n')\leqslant f(n!)$. Thus
\[
|A:U| = |A:UN|\cdot |UN:U| = |A:UN|\cdot |N:U\cap N|\leqslant |A:N|\cdot |N:U\cap N|\leqslant n!\cdot f(n!).
\]
Thus we have shown that, if Conjecture~\ref{conj2} holds relative to an increasing integer function $f$, then Conjecture~\ref{conj1} holds with the increasing integer function $n\to n! f(n!)$. \hfill \qed

\section{Examples}
\label{s:examples}

In light of Theorem~\ref{t:equiv}, it is sufficient to give examples of groups satisfying the conditions of Conjecture~\ref{conj2}. Thus we are interested in a group $A$ with a normal subgroup $G$ and a proper subgroup $U < G$ such that $P_A(G) = P_A(U)$. The normality of $G$ in $A$ means that $P_A(G) = \PP(G)$. For such triples $(A,G,U)$ we are tasked with understanding the relationship between $|A:G|$ and $|G:U|$.

\begin{enumerate}
\item  \emph{Examples with $G$ elementary abelian:}\quad Let $p$ be a prime and let $d$ be a positive integer. Let $A= \AGL(d,p) = G \rtimes \GL(d,p)$, $G \cong C_p^d$ and, regarding $G$ as a $d$-dimensional $\mathbb{F}_p$-space, let $U$ be any non-zero subspace of $G$. Since $\GL(d,p)$ is transitive on the non-zero vectors of $G$, we have  $P_A(U)=P_A(G)=G$. Note that $A$ is naturally an affine permutation group of degree $p^d$, and we can obtain further examples with the same $U$ by taking $A=G \rtimes H$ where $H$ is any subgroup of $\GL(d,p)$ transitive on non-zero vectors. For these groups we see that $|A:G| = |H| \geqslant p^d-1$ and  $|G:U|  \leqslant \frac{p^d}{p} = p^{d-1}$. Thus $|G:U| < |A:G|$. For these examples, the closest these quantities can get occurs when $H=\GL_1(p^d)$ and $U=C_p$, where we have $|A:G|/p<|G:U|<|A:G|$. 

\item \emph{Examples with $G$ a non-abelian $r$-group:}\quad Let $r$ be an odd prime and  let $m$ be a positive integer. Let $p$ be a prime such that $r$ divides $p-1$. In the linear groups $\GL(r^m,p)$, the `extraspecial normaliser' $A = r_+^{1+2m} \rtimes \Sp(2m,r)$ has a normal subgroup $G=r_+^{1+2m}$, which is a non-abelian group of order $r^{1+2m}$ and exponent $r$. Let $U$ be any  subgroup of $G$ properly containing $Z(G)$. Since $A/Z(G) \cong r^m \rtimes \Sp(2m,r)$ is an affine $2$-transitive group (such as in Example 1 above), we see that every element of $G/Z(G)$ is $A/Z(G)$-conjugate to an element of $U/Z(G)$. Thus for any $g\in G$, there is an element $a\in A$ such that $g^a \in UZ(G) = U$, and it follows that  $P_A(G)=P_A(U)$. We may  find examples where $G$ is a non-abelian $2$-group in a similar way. Note that since $U$ can be any  subgroup of $G$ properly containing $Z(G)$, the subgroup $U$ may be abelian or, (if $m\geq2$) nonabelian. 

\item \emph{Examples with $G$ a non-abelian characteristically simple group:}\quad  Let $T$ be a non-abelian simple group and let $k \geqslant m_0(T)+1$, where $m_0(T)$ is the maximum over all primes $p$, of the number of  conjugacy classes of $p$-elements in $T$. (For convenience we work with $m_0(T)$, which is in general slightly larger than $m(T)$ given in \eqref{mT}.) Set $G=T^k$ and let  $A = T \wr \Sym(k) = G \rtimes \Sym(k)$. Let $U = \{ (t_1,t_2,\ldots,t_{k-2},t,t) \mid t_i,t\in T\}$. Then $U$ is a maximal subgroup of $G$. If $x=(x_1,\ldots,x_k)\in G$ is a $p$-element, then since $k \geqslant m_0(T)+1$, there exist distinct $i, j$ such that $x_i$ and $x_j$ belong to the same conjugacy class in $T$. Let  $t=(t_1,\ldots,t_k) \in G$ be such that $x_j^{t_j} = x_i$ and $t_s=1$ for $s\neq j$, and let $\sigma \in \Sym(k)$ be such that $i^{\sigma}=k-1$ and $j^{\sigma} = k$. Then $t\sigma \in A$ and the $k^{th}$ and $(k-1)^{st}$ entries of $x^{t \sigma} $ are $x_i$ and $x_j^{t_j}=x_i$, respectively, so $x^{t\sigma} \in U$. Hence $P_A(G) = P_A(U)$. 

This example may be further generalised by replacing $\Sym(k)$ by any $2$-transitive group $H$ of degree $k$. In an extreme case, when $k$ is a prime power, $H$ can be taken to be a sharply $2$-transitive group, so that $|H|=k(k-1)$. In this instance, we may take $k$ as the smallest prime power greater than $m_0(T)$, and we note that $k\leq 2m_0(T)$ (by Bertrand's Postulate, see \cite[Theorem 8.7]{NZM}). Then $|A:G| = |H| = k(k-1)<4m_0(T)^2$ and $|G:U| = |T|$. Thus in this case Conjecture~\ref{conj2} essentially asserts that $|T|$ is bounded above in terms of $m_0(T)$ (suggesting how Theorem~\ref{t:mT} may be relevant to the proof of Conjecture~\ref{conj2}). The smallest possible example in this vein is exhibited with $T=\Alt(5)$, $k=4$ and $H=\Alt(4)$. Here $A=\Alt(5) \wr \Alt(4)$, $|A:G|=12$ and $|G:U|=|\Alt(5)|=60$.
\end{enumerate}

\section{Bound for innately transitive actions}\label{s:main}

In this section we prove Theorem~\ref{t:mainIT}. Throughout the section, the following hypothesis and notation will hold:

\begin{hypothesis}\label{hyp:main}
   Let $A$ be a finite group with normal subgroup $G$ of index $n$ and  $U$ a proper subgroup of $G$.  Set 
   $$\Omega = [A:U] = \{ Ua \mid a \in A \}.$$ 
   This means that the $A$-coset action by right multiplication on $\Omega$ is transitive
   of degree 
   $$|\Omega|=|A:U|=n\cdot |G:U|$$ and,  for the `point' $\alpha = U$ of  $\Omega$,  the stabiliser $A_\alpha = G_\alpha = U$.   
   Since 
   $G\unlhd A$ and $G$ contains the point stabiliser $U$, it follows that 
   $$\Omega = \Omega_1 \cup \dots \cup \Omega_n$$
   with  each $\Omega_i$ a $G$-orbit and $\alpha\in\Omega_1$.   Further, $A$ permutes the $\Omega_i$ transitively, hence the groups $G^{\Omega_i}$ are  permutation equivalent. Finally, for $1\leqslant i \leqslant n$, we set:
   $$K_i = G_{(\Omega_i)}, \quad \text{the kernel of the action of } G \text{ on } \Omega_i.$$
\end{hypothesis}


  We first record a simple observation that the  property $P_A(G)=P_A(U)$ is inherited by certain subgroups of  $G$.

\begin{lem}
\label{l:P_A(S) = P_A(UcapS)}
Suppose that $X \leqslant G $ and $X$ is normal in $A$. If $P_A(G)=P_A(U)$, then we have the equality $P_A(X) = P_A(U \cap X)$.
\end{lem}
\begin{proof}
        Since $X$ is normal in $A$, $P_A(U\cap X)\subseteq P_A(X)$.  For the reverse inclusion let $x\in X$ have prime power order. Then $x\in P_A(G)=P_A(U)$ so $x=y^a$ for some $a\in A$, and $y\in U$. This implies that $y=x^{a^{-1}}\in U\cap X^{a^{-1}}=U\cap X$,  so $x=y^a\in P_A(U\cap X)$ . Thus equality holds.
\end{proof}

Now let 
\[
N=\Core_A(U) = \bigcap_{a\in A} U^a,
\]
the \emph{$A$-core of $U$} which is equal to the kernel of the action of $A$ on $\Omega$. We next observe that the property $P_A(G)= P_A(U)$ is inherited by the quotient group $A/N$ whilst preserving both the index $|G:U|$ and the permutation group induced on $[G:U]$ by $G$.
 
\begin{lem}\label{l:cortriv}
    For each subgroup $H\leqslant A$, write $\widehat{H}:=HN/N$. Then $\widehat{A}$ is a finite group  with normal subgroup $\widehat{G}$ of index $n$,   $\widehat{U}$ is a subgroup of $\widehat{G}$ with  $|\wh{G}:\wh{U}|=|G:U|$ and the $\wh{A}$-action on $[\wh{A}:\widehat{U}]$ is permutation equivalent to the $A$-action on $[A:U]$. Moreover, $P_A(G)=P_A(U)$ if and only if $P_{\widehat{A}}(\widehat{U})=P_{\widehat{A}}(\widehat{G})$.
\end{lem}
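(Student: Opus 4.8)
The plan is to check each claimed property of the quotient in turn, with the only genuinely substantive point being the behaviour of the prime-power-covering property under the quotient map $A \to \widehat A = A/N$.

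First I would dispose of the structural claims. Since $N = \Core_A(U) \leqslant U \leqslant G \leqslant A$ and $N \unlhd A$, the correspondence theorem gives that $\widehat U = UN/N = U/N$, $\widehat G = G/N$ and $\widehat A = A/N$, with $\widehat G \unlhd \widehat A$. For the indices, $|\widehat A : \widehat G| = |A/N : G/N| = |A:G| = n$ and $|\widehat G : \widehat U| = |G/N : U/N| = |G:U|$ (here $N \leqslant U$ is exactly what makes these equalities clean). The permutation equivalence of the $\widehat A$-action on $[\widehat A : \widehat U]$ with the $A$-action on $[A:U] = \Omega$ is standard: $N$ is the kernel of the $A$-action on $\Omega$ (it is the $A$-core of the point stabiliser $U$), so the map $Ua \mapsto \widehat U \widehat a = (U/N)(aN)$ is a well-defined bijection $[A:U] \to [\widehat A : \widehat U]$ intertwining the two actions. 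I would state this with a one-line justification rather than belabour it.

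The main step is the equivalence $P_A(G) = P_A(U) \iff P_{\widehat A}(\widehat U) = P_{\widehat A}(\widehat G)$. Write $\pi : A \to \widehat A$ for the quotient map; note $G^\pi = \widehat G$, $U^\pi = \widehat U$, and $\pi$ carries $A$-classes onto $\widehat A$-classes, so $P_A(H)^\pi = P_{\widehat A}(H^\pi)$ for any subgroup $H \leqslant A$, because $\pi$ sends an element of prime power order to an element of prime power order. This immediately gives the forward direction: if $P_A(U) = P_A(G)$ then applying $\pi$ yields $P_{\widehat A}(\widehat U) = P_{\widehat A}(\widehat G)$. For the reverse direction the delicate point is that $\pi$ need not reflect prime-power order — an element $g \in G$ of prime power order has $g^\pi$ of prime power order, but the converse can fail. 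So suppose $P_{\widehat A}(\widehat U) = P_{\widehat A}(\widehat G)$; let $g \in G$ have prime power order $p^k$ and aim to show $g \in P_A(U)$. Since $N = \Core_A(U) \leqslant U$ and $P_A(G) \supseteq P_A(U)$ always holds (as $U \leqslant G$), it suffices to produce $a \in A$ with $g^a \in U$. Here I would use that $N \leqslant U$: working in $\widehat G$, the image $\widehat g = g^\pi$ lies in $P_{\widehat A}(\widehat G) = P_{\widehat A}(\widehat U)$ (note $\widehat g$ has $p$-power order since $g$ does), so there is $\widehat a \in \widehat A$ and $\widehat u \in \widehat U$ of prime power order with $\widehat g{}^{\widehat a} = \widehat u$; lifting, $g^a \in uN$ for some $u \in U$ and $a \in A$. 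Thus $g^a = u n$ for some $n \in N \leqslant U$, whence $g^a \in U$, as required. Combining with $P_A(U) \subseteq P_A(G)$ gives $P_A(U) = P_A(G)$.

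I expect the reverse implication to be the only place needing care, and within it the crucial leverage is the containment $N \leqslant U$: it is what allows a lift of the conjugating element modulo $N$ to be corrected back into $U$ without any control over orders of elements of $N$. (One does not even need an order-of-element argument for $u$ itself; membership $g^a \in uN \subseteq U$ is enough, since we already know $P_A(U) \subseteq P_A(G)$ and $g^a \in U$ forces $g \in P_A(U)$.) No other subtlety arises, so the write-up should be short.
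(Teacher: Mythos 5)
Your structural claims, the coset bijection $Ua \mapsto \widehat{U}\widehat{a}$, and your ``if'' direction (lift the conjugating element modulo $N$ and absorb the error into $U$ via $N \leqslant U$) are exactly the paper's argument. The gap is in the ``only if'' direction, which you declare immediate. You assert $P_A(H)^\pi = P_{\widehat{A}}(H^\pi)$ for all $H\leqslant A$ on the sole grounds that $\pi$ sends elements of prime power order to elements of prime power order; that justifies only the inclusion $P_A(H)^\pi \subseteq P_{\widehat{A}}(H^\pi)$. To deduce $P_{\widehat{A}}(\widehat{G}) \subseteq P_{\widehat{A}}(\widehat{U})$ from $P_A(G)=P_A(U)$ you need the opposite inclusion for $H=G$, namely $P_{\widehat{A}}(\widehat{G}) \subseteq P_A(G)^\pi$: given $xN\in\widehat{G}$ of order $p^r$, you must exhibit an element of $G$ of prime power order in the relevant $A$-class, and an arbitrary lift $x$ need not have prime power order. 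This is precisely the failure of $\pi$ to reflect prime-power order that you yourself flag, but you locate it in the wrong implication: in your ``if'' argument you start from $g\in G$ already of prime power order, so no such issue arises there, whereas in the ``only if'' direction it is the entire content.

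The repair is short and is what the paper does. Write $|x| = p^s m$ with $p \nmid m$ and choose $b$ with $b \equiv 1 \pmod{p^s}$ and $b \equiv 0 \pmod{m}$; then $x^b$ is a $p$-element of $G$, and since $|xN|=p^r$ divides $p^s m$ we have $r \leqslant s$, so $x^b N = (xN)^b = xN$ (equivalently: the $p'$-part of $x$ lies in $N$). Applying $P_A(G)=P_A(U)$ to this $p$-element and projecting gives $xN \in P_{\widehat{A}}(\widehat{U})$. With that sentence inserted, your proof coincides with the paper's; as written, the forward implication is not justified by what you state.
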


\begin{proof}
      By the Correspondence Theorem $\widehat{G}$ is normal in  $\widehat{A}$ of index $n$, and as $U$ contains $N$,   $\widehat{U}$ is a subgroup of $\widehat{G}$ with index equal to $|G:U|$.   For $a\in A$ we define $\widehat{a}=Na$, then the map $\rho : [A:U] \rightarrow [\widehat{A}:\widehat{U}]$ defined by $\rho : Ua \mapsto \widehat{U}\widehat{a}$ induces the permutation equivalence (it is part of the Correspondence Theorem quoted above that $\rho$ is a bijection).  For $b\in A$, we have 
  $$((Ua)^b)\rho = (Uab)\rho = \widehat{U}\widehat{ab} = \widehat{U}\widehat{a}\widehat{b}= (\widehat{U}\widehat{a})^{\widehat{b}}= ((Ua)\rho)^{\widehat{b}}$$
  as required.
    
    For the final assertion, suppose first that $P_A(G)=P_A(U)$. Since $\wh{U} \leqslant \wh{G}$,  $P_{\widehat{A}}(\widehat{U})\subseteq P_{\widehat{A}}(\widehat{G})$. To prove the reverse inclusion, let $xN\in \widehat{G}$ be of order $p^r$ for some prime $p$ and integer $r\geqslant1$. Then, replacing $x$ if necessary by a power $x^b$ with $b$ coprime to $p$,  we may assume that $|x|=p^s$ for some $s\geqslant r$. Since $P_A(U)=P_A(G)$, it follows that  $x=y^a$ for some $y\in U$ and $a\in A$. Thus $xN = (yN)^{aN}\in P_{\widehat{A}}(\widehat{U})$, proving that $P_{\wh{A}}(\wh{G})=P_{\wh{A}}(\wh{U})$.

    Suppose conversely that $P_{\wh{A}}(\wh{G})=P_{\wh{A}}(\wh{U})$ and let $x\in G$ have prime power order. Then $xN$ has prime power order in $\wh{A}$ and so there is a coset $aN \in \wh{A}$ such that $(xN)^{aN} = x^aN \in \wh{U}$. Thus $x^a \in NU = U$, and so $P_A(G) = P_A(U)$, as required.
\end{proof}

It follows from Lemma~\ref{l:cortriv} that, if Conjecture~\ref{conj2} holds, with some increasing integer function $f$, in the case where $U$  has trivial $A$-core and the $G$-action on $[G:U]$ is innately transitive, then it holds in general,  with the same function $f$, for arbitrary subgroups $U$ of $G$ such that the $G$-action on $[G:U]$ is innately transitive. 
Thus  from now on we will adopt the following hypothesis:

\begin{hypothesis}\label{h:main2}
    Hypothesis~\ref{hyp:main} holds and additionally, $U$ is core-free in $A$ and $G^{\Omega_i}$ is innately transitive, for each $i$.
\end{hypothesis}

  
  We will divide our analysis into three cases: (i) $G^{\Omega_i}$ has an abelian plinth (the affine case); (ii) $G^{\Omega_i}$ has a unique plinth that is nonabelian; (iii) $G^{\Omega_i}$ has at least two plinths. These cases will be explored in the subsequent subsections.

\subsection{Affine type}

In this section we assume that Hypothesis~\ref{h:main2} holds and that $G^{\Omega_i}$ has  an abelian plinth. By Theorem~\ref{t:case div of IT}, $G^{\Omega_i}$ is primitive of affine type. 





\begin{lem}\label{l:affine}
Suppose that $G^{\Omega_i}$ is a primitive group of affine type. If $P_A(G)=P_A(U)$, then $|G:U| \leqslant n$.
\end{lem}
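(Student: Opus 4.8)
The plan is to exploit the affine structure of $G^{\Omega_i}$ together with Lemma~\ref{l:P_A(S) = P_A(UcapS)}. Since $G^{\Omega_1}$ is primitive of affine type, it has a unique plinth $V$ which is elementary abelian, say $V \cong C_p^d$, and $V$ acts regularly on $\Omega_1$, so $|\Omega_1| = p^d$. First I would pull this plinth back to $G$: let $M \leqslant G$ be the preimage of $V$ under the action map $G \to G^{\Omega_1}$, so $M = V K_1$ where $K_1$ is the kernel of $G$ on $\Omega_1$. The key observation is that $M$, or rather a suitable characteristic section of it, should be normal in $A$ so that Lemma~\ref{l:P_A(S) = P_A(UcapS)} applies. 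Here I need to be a little careful: $K_1$ itself need not be $A$-invariant since $A$ permutes the $\Omega_i$; the right object is $N' := \bigcap_{a \in A} K_1^a$, the $A$-core of $K_1$, which equals the kernel of $A$ on $\Omega_1 \cup \cdots \cup \Omega_n = \Omega$, hence is core-free by Hypothesis~\ref{h:main2}, i.e. $N' = 1$. Wait — that only tells me $K_1$ has trivial $A$-core, not that $K_1 = 1$. Let me instead work inside $G^{\Omega_1}$ directly after first reducing.

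Here is the cleaner route. By Lemma~\ref{l:cortriv} we may already assume $U$ is core-free in $A$; I claim it suffices to bound $|G:U| = |\Omega_1|$. Consider the primitive affine group $H := G^{\Omega_1}$ of degree $p^d = |\Omega_1|$, with socle $V \cong C_p^d$ acting regularly, and let $\bar U := U^{\Omega_1}$ be the image of $U$; then $|H : \bar U| = |\Omega_1| = p^d$ and $\bar U$ is a point stabiliser, complementing $V$. The subgroup $V$ consists entirely of $p$-elements. Now for each $v \in V$, since $P_A(G) = P_A(U)$, the element $v$ (viewed suitably, or any $g \in G$ mapping to $v$) is $A$-conjugate into $U$; but the point is more elementary — every nontrivial element of $V$ has prime power order $p$, and $H$ acting on $\Omega_1$ has the property that $\bar U$ meets every $G$-class... no. The honest mechanism: because $P_A(G)=P_A(U)$, the set $\Omega_1$ is covered by the $G$-translates (indeed $A$-translates) of fixed points of prime-power-order elements that lie in $U$. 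Concretely, every $p$-element of $G$ has a fixed point on $\Omega$, and in fact the Fein–Kantor–Schacher-type reasoning via the covering hypothesis forces: each point of $\Omega_1$ is fixed by some conjugate of a prime-power-order element of $U$. Since $V \leqslant H$ is transitive on $\Omega_1$ with $|V| = p^d$, and a nontrivial $p$-element fixing a point of $\Omega_1$ lies in a point stabiliser, counting fixed points shows the number of $G$-classes (equivalently $A$-classes) of $p$-elements needed to cover $\Omega_1$ is at least... This is where I expect the real argument to live.

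Let me state the plan I actually believe works. Let $x \in G$ have image in $H = G^{\Omega_1}$ equal to a generator of a complement-free... Instead: apply Lemma~\ref{l:P_A(S) = P_A(UcapS)} — but we need an $A$-normal subgroup of $G$ inside $\Omega_1$'s structure. Take $X := \Soc(G^{\Omega_1})$ pulled back, intersected appropriately; since $A$ acts on the set of $G$-orbits and $G^{\Omega_i}$ are permutation-equivalent, the "diagonal" object $\prod$ of the plinths across all $\Omega_i$ is $A$-invariant. So set $X$ = preimage in $G$ of the socle $V_i$ of $G^{\Omega_i}$, taken simultaneously for all $i$; more precisely $X$ = subgroup of $G$ acting as a translation on each $\Omega_i$ — this is $A$-invariant since $A$ permutes the $\Omega_i$ and the construction is uniform. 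Then $X \trianglelefteq A$, $X \leqslant G$, so $P_A(X) = P_A(U \cap X)$ by Lemma~\ref{l:P_A(S) = P_A(UcapS)}. Now $X$ acts on $\Omega_1$ as the regular group $V \cong C_p^d$ (through the quotient by $K_1 \cap X$), all its elements are $p$-elements, and $(U \cap X)$ acts on $\Omega_1$ as $\bar U \cap V = 1$ since $\bar U$ is a point stabiliser complementing $V$. So every $p$-element of $X$ is $A$-conjugate into $U \cap X$, which acts trivially on $\Omega_1$. Using that $X^{\Omega_1} = V$ has exactly $p^d$ elements in a single orbit and that conjugation in $A$ must move the fixed-point set of each element of $X$ onto fixed points inside $\Omega_1$ compatibly, the number $p^d$ of elements of $V$ gets partitioned into at most (number of distinct $A$-classes meeting $U \cap X$) $\cdot$ (sizes) — and a pigeonhole against the $n$ orbits $\Omega_1, \ldots, \Omega_n$ yields $p^d = |\Omega_1| = |G:U| \leqslant n$.

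The main obstacle, and the step I would scrutinize most, is making the last counting precise: turning "$U \cap X$ is trivial on $\Omega_1$ yet $P_A(X) = P_A(U \cap X)$" into the bound $|\Omega_1| \leqslant n$. The mechanism must be that a nontrivial $p$-element of $X$ has fixed points only outside $\Omega_1$ (if it is nontrivial on $\Omega_1$ it fixes no point of $\Omega_1$ since $V$ is regular), so its $A$-conjugate into $U \cap X$ forces that conjugate to act trivially on $\Omega_1$, i.e. to act nontrivially on some other $\Omega_j$; running over the $p^d - 1$ nontrivial elements of $V$ and the $n$ orbits, and using that on each $\Omega_j \cong \Omega_1$ at most $|V| - 1 = p^d - 1$... that gives nothing. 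The correct count: $U \cap X$ fixes $\Omega_1$ pointwise, so a $p$-element conjugated into it fixes $\Omega_1$ pointwise, hence was already fixing an $(|\Omega|/n)$-sized set; the transitivity of $A$ on $\{\Omega_1, \dots, \Omega_n\}$ means each $A$-class of such elements distributes its "trivial-orbit" among the $n$ blocks, and since $X$ acts faithfully-regularly on $\Omega_1$ the only element trivial on all of $\Omega_1$ that is an honest translation is $1$ — so actually $U \cap X = 1$, forcing $P_A(X) = \{1\}$, i.e. $X = 1$, i.e. $K_1$ contains no translations, contradiction unless $d$ is small. I would then extract the bound $p^d \leqslant n$ from the failure of this for large $d$: precisely, $X \neq 1$ forces some translate structure surviving, and $|X^{\Omega_1}| = |\Omega_1|$ divides... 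Honestly, I expect the published proof uses a slicker fixed-point count: each of the $|\Omega_1|$ points of $\Omega_1$ lies in the $A$-orbit of a point of $\Omega_1$ fixed by a prime-power element of $U$, and since $A$ has only $n$ orbit-blocks to distribute images among, $|\Omega_1| \leqslant n$. I would write it that way, with the details of the fixed-point bookkeeping being the one place to take real care.
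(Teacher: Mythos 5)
Your setup is essentially the paper's: you pass to an $A$-invariant subgroup $X\leqslant G$ whose image on each $\Omega_i$ lies in (in fact equals) the elementary abelian socle of $G^{\Omega_i}$ (the paper simply uses $X=\Soc(G)$, shows $\Soc(G)^{\Omega_i}=\soc(G^{\Omega_i})$ via $A$-transitivity on the orbits, and notes it is elementary abelian), you invoke Lemma~\ref{l:P_A(S) = P_A(UcapS)} to get $P_A(X)=P_A(U\cap X)$, and you observe that $(U\cap X)^{\Omega_1}=1$ because the socle is regular on $\Omega_1$. Up to that point the proposal tracks the published argument, modulo small unproved assertions (e.g.\ that $X$ really surjects onto $\soc(G^{\Omega_1})$, which needs the $A$-transitivity argument, and that a single prime is involved across all orbits, which comes from the permutation equivalence of the $G^{\Omega_i}$).

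The genuine gap is the final counting, which is the heart of the proof and which you explicitly leave open; moreover the mechanisms you float for it are wrong. You assert that ``$X$ acts faithfully-regularly on $\Omega_1$'' and conclude $U\cap X=1$, hence $X=1$: this conflates regularity of $X^{\Omega_1}$ with faithfulness of $X$ on $\Omega_1$. In fact $U\cap X=X\cap K_1=X_{(\Omega_1)}$, the pointwise stabiliser of $\Omega_1$ in $X$, which is typically nontrivial (it is the kernel of $X$ on $\Omega_1$), and the ensuing ``contradiction unless $d$ is small'' and the closing fixed-point pigeonhole are not arguments. The correct step is short: since every element of $X$ has $p$-power order, $P_A(X)=X$; on the other hand, for $a\in A$ with $\Omega_1^a=\Omega_i$ one has $(U\cap X)^a=U^a\cap X=X_{(\Omega_i)}$, because an element of $X$ fixing a point of $\Omega_i$ acts on $\Omega_i$ as an element of the regular socle fixing a point, hence trivially. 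Therefore
\[
X=P_A(X)=P_A(U\cap X)=\bigcup_{a\in A}(U\cap X)^a=\bigcup_{i=1}^{n}X_{(\Omega_i)},
\]
a union of at most $n$ subgroups each of index $|X:X_{(\Omega_i)}|=|X^{\Omega_i}|=|\Omega_i|=|G:U|$ in $X$, so $|X|\leqslant n\,|X|/|G:U|$, giving $|G:U|\leqslant n$. This order-comparison over the $n$ pointwise stabilisers is exactly the bookkeeping you were missing.
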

\begin{proof}
Let $S = \soc(G)$ and let $X$ be a minimal normal subgroup of $G$, so $X\leqslant S$. Since $X \neq 1$, and $G$ acts faithfully on $\Omega$, there exists $i$ such that $X^{\Omega_i}\neq 1$, and so  $X^{\Omega_i}$ is a minimal normal subgroup of $G^{\Omega_i}$. Since $G^{\Omega_i}$ is an affine primitive group, the induced group $X^{\Omega_i} = \soc(G^{\Omega_i})$, which  is elementary abelian, and we also have $S^{\Omega_i} = \soc(G^{\Omega_i})$.  Since $X\leqslant S\leqslant \prod_{i=1}^n S^{\Omega_i}$, it follows from the minimality of $X$ that $X \cong X^{\Omega_i}=S^{\Omega_i}$. This implies that $X$ and $S$ are elementary abelian groups. In particular, every element of $S$ is a $p$-element for some prime $p$. Since $A$ acts transitively on $\{\Omega_1,\ldots,\Omega_n\}$, it follows that for any $i$ we have $S^{\Omega_i} = \soc(G^{\Omega_i})$ and so $S$ acts regularly on each $\Omega_i$. In particular $U\cap S=S_\alpha$ fixes $\Omega_1$ pointwise, so $U \cap S = S_{(\Omega_1)}$ and $|S:U\cap S|=|\Omega_1|=|G:U|$. For each $a\in A$, the image $\Omega_1^a=\Omega_i$ for some $i$, and we have $(U \cap S)^a=U^a\cap S = S_{(\Omega_i)}$. By Lemma~\ref{l:P_A(S) = P_A(UcapS)} the equality $P_A(G)=P_A(U)$ implies that $P_A(S)=P_A(U \cap S)$. Therefore, since every element of $S$ is a $p$-element, 
\[ 
S= P_A(S)= \bigcup_{a\in A} (U\cap S)^a = \bigcup _{i=1}^n S_{(\Omega_i)},
\]
Hence $|S|\leqslant n\cdot |U\cap S|$, and so $|G:U|=|S:U\cap S|\leqslant n$. 
\end{proof}



\subsection{Unique nonabelian plinth}
\label{sec:unique plinth}

In this section we assume that Hypothesis~\ref{h:main2} holds and that $G^{\Omega_i}$ has  a unique plinth that is nonabelian. Since the groups $G^{\Omega_i}$ are equivalent for all $i$, this condition holds for all $i$. 

\begin{lem}\label{l:soc}
Suppose that for each $i$ the   group $G^{\Omega_i}$ has a unique plinth $P_i \cong  T^s$ for some finite nonabelian simple group $T$ and positive integer $s$. \begin{enumerate}
    \item[(a)]  There exists a unique minimal normal subgroup $S_i$ of $G$ such that $S_i \cong S_i^{\Omega_i} = P_i$.
\end{enumerate}
\noindent    Let $S:=\langle S_1,\ldots,S_n \rangle$.
\begin{enumerate}
    \item[(b)] There exists a positive divisor $t$ of $n$ such that, after relabelling the $S_i$ if necessary, we have $S =  S_1 \times \cdots \times S_t \cong T^{st}$ and $S^{\Omega_i} = S_i^{\Omega_i} \cong S_i$ for $1\leqslant i \leqslant t$.
\end{enumerate}  
\end{lem}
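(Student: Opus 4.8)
The plan is to handle the two parts separately, with part~(a) being essentially a uniqueness-of-plinth transfer argument and part~(b) a bookkeeping argument about how $A$ permutes the subgroups $S_i$.

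For part~(a), I would start by letting $X$ be any minimal normal subgroup of $G$. As $G$ acts faithfully on $\Omega = \bigcup_i \Omega_i$, there is some $i$ with $X^{\Omega_i} \neq 1$, and then $X^{\Omega_i}$ is a minimal normal subgroup of $G^{\Omega_i}$; since $X$ maps isomorphically onto $X^{\Omega_i}$ (because $X$ is minimal normal and $X \cap \ker(G \to G^{\Omega_i})$ is a proper normal subgroup of $X$), we get $X \cong X^{\Omega_i}$. Now I want to show $X^{\Omega_i}$ is actually the plinth $P_i$. Here I would invoke the structure theory: $X^{\Omega_i} = T_0^{r}$ for some simple $T_0$; if $T_0$ is abelian then $X^{\Omega_i} = \soc(G^{\Omega_i})$, contradicting that the unique plinth $P_i \cong T^s$ is nonabelian (a nonabelian-plinth innately transitive group has no abelian minimal normal subgroup — its socle is contained in the plinth, or one can cite Theorem~\ref{t:case div of IT} together with \cite[Theorem 3.2]{PS}). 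So $T_0$ is nonabelian, hence $X^{\Omega_i}$, being a minimal normal subgroup of the innately transitive group $G^{\Omega_i}$ with a unique plinth, must equal $P_i$ — otherwise $X^{\Omega_i}$ centralises $P_i$ and is regular, giving a second plinth, contradiction. Thus $X \cong P_i \cong T^s$. To get \emph{uniqueness} of such a minimal normal subgroup $S_i$ of $G$ projecting onto $P_i$ along $\Omega_i$: if $S_i, S_i'$ were two distinct such, they commute, and both project isomorphically onto $P_i$; then $S_iS_i'/(S_i\cap S_i') $ analysis, or more simply, $S_i'$ centralises $S_i$ so $(S_i')^{\Omega_i}$ centralises $P_i = (S_i)^{\Omega_i}$ in $G^{\Omega_i}$, but $(S_i')^{\Omega_i} = P_i$ has trivial centraliser in itself modulo... — cleaner: $C_{G^{\Omega_i}}(P_i)$ has order at most $|P_i|$ and is either trivial or a second regular plinth (by \cite[Theorem 3.2]{PS} and uniqueness of the plinth it is trivial), forcing $(S_i')^{\Omega_i} \cap C = (S_i')^{\Omega_i} = P_i \not\le C$ — so in fact $(S_i')^{\Omega_i} \le P_i$ is impossible to be central, and one concludes $S_i' = S_i$. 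This uniqueness gives a well-defined $S_i$ for each $i$.

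For part~(b), the key point is that $A$ acts on the set $\{\Omega_1,\dots,\Omega_n\}$ transitively and $G \trianglelefteq A$, so conjugation by $A$ permutes the minimal normal subgroups of $G$ and, because of the uniqueness established in~(a) and the permutation-equivalence of the $G^{\Omega_i}$, it permutes $\{S_1,\dots,S_n\}$ transitively — more precisely, if $a \in A$ sends $\Omega_1$ to $\Omega_i$, then $S_1^a$ is a minimal normal subgroup of $G$ projecting isomorphically onto $(S_1^a)^{\Omega_i} = (S_1^{\Omega_1})^a$, which is the plinth $P_i$, so by uniqueness $S_1^a = S_i$. Hence all the $S_i$ are $A$-conjugate, in particular pairwise isomorphic to $T^s$, and $\{S_1,\dots,S_n\}$ is a single $A$-orbit. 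Now $S = \langle S_1,\dots,S_n\rangle$; the $S_i$ pairwise commute (distinct minimal normal subgroups intersect trivially hence centralise each other, using $S_i \cap S_j$ being a proper normal subgroup of the minimal normal $S_i$), so $S$ is the product of those $S_i$ it contains, and by the transitive $A$-action some subset of size $t := $ (the number of \emph{distinct} $S_i$) forms an internal direct product $S = S_{i_1} \times \cdots \times S_{i_t} \cong T^{st}$; relabel so these are $S_1,\dots,S_t$. That $t \mid n$: the map $\Omega_j \mapsto S_j$ is constant on the $A$-conjugacy of the pair, and the fibres of $j \mapsto S_j$ all have the same size $n/t$ because $A$ acts transitively on the $\Omega_j$ and the stabiliser in $A$ of $S_1$ has a well-defined orbit structure; spelling this out, the $A$-action on $\{\Omega_1,\dots,\Omega_n\}$ is transitive and factors through the $A$-action on $\{S_1,\dots,S_t\}$ (also transitive), so $t \mid n$ by orbit-counting. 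Finally $S^{\Omega_i} = S_i^{\Omega_i} \cong S_i$ for $1 \le i \le t$: since for $j \neq i$ with $j \le t$ we have $S_j \cap S_i = 1$ and $S_i$ projects onto all of $P_i$, the subgroups $S_j$ ($j\neq i$) must act trivially on $\Omega_i$ — indeed $(S_j)^{\Omega_i}$ centralises $(S_i)^{\Omega_i} = P_i$, so lies in $C_{G^{\Omega_i}}(P_i) = 1$ — giving $S^{\Omega_i} = (S_1 \cdots S_t)^{\Omega_i} = S_i^{\Omega_i} \cong S_i$.

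I expect the main obstacle to be the clean justification that $(S_j)^{\Omega_i} = 1$ for $j \neq i$ (equivalently, that $C_{G^{\Omega_i}}(P_i) = 1$), and the careful argument that the $S_i$ which actually appear in the product $S$ really do form an internal direct product rather than merely a product with small overlaps — this needs one to rule out $S_i$ and $S_j$ being distinct but not independent, which is where the minimality of the $S_i$ as normal subgroups of $G$ and their pairwise commuting are used. The divisibility $t \mid n$ is then a formality from transitivity of the $A$-action on blocks, but it should be stated carefully since the $A$-orbit on $\{S_1,\dots,S_n\}$ could a priori be shorter than $n$ only because of repetitions, and one must confirm each distinct value is hit the same number of times.
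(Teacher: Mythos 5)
Your plan differs substantially in structure from the paper's proof, and unfortunately contains a genuine gap that runs through both parts.

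The central problem is that you implicitly assume $P_i = \soc(G^{\Omega_i})$, equivalently that $C_{G^{\Omega_i}}(P_i)=1$. Neither is guaranteed for an innately transitive group with a unique nonabelian plinth. As a concrete counterexample, take $T=\Alt(5)$ acting on $30$ cosets of a subgroup $C_2$; then $C_{\Sym(30)}(T)\cong N_T(C_2)/C_2\cong C_2$, and $G=T\times C_2$ is innately transitive with unique nonabelian plinth $T$ but with $C_G(T)=C_2\neq 1$ and $\soc(G)=T\times C_2\neq T$. This breaks two of your steps. First, in your existence argument for (a) you take an arbitrary minimal normal subgroup $X$ of $G$ and argue that $X^{\Omega_i}$ must equal $P_i$, disposing of the abelian case by ``$X^{\Omega_i}=\soc(G^{\Omega_i})$''; in the example above, the minimal normal subgroup $C_2$ of $G$ has abelian nontrivial image that is a proper subgroup of the socle, so that case is not eliminated and indeed not every minimal normal subgroup of $G$ projects to a plinth. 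Second, in (b) you conclude $S_j^{\Omega_i}=1$ for $j\neq i$ from ``$S_j^{\Omega_i}\leqslant C_{G^{\Omega_i}}(P_i)=1$'', which again uses the false triviality of the centraliser.

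Both steps can be repaired, but not by the centraliser-triviality claim; what is actually available is only $|C_{G^{\Omega_i}}(P_i)|\leqslant|P_i|$ together with the fact (encoded in the paper's Lemma~\ref{l:ITgps}) that a normal subgroup of $G^{\Omega_i}$ containing a chief factor isomorphic to $T^s$ must contain a plinth. This is exactly where the isomorphism type $T^s$ matters: the image $S_j^{\Omega_i}$, if nontrivial, is a minimal normal subgroup of $G^{\Omega_i}$ of order $|T|^s$, forcing $|C_{G^{\Omega_i}}(P_i)|=|P_i|$ and hence (by the Praeger--Schneider theorem cited in Lemma~\ref{l:ITgps}) that $C_{G^{\Omega_i}}(P_i)$ is a second, regular plinth, contradicting uniqueness. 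Your argument never tracks the order of the image, so it misses this mechanism. The paper avoids the existence issue in (a) entirely by a different construction: it does not start from a minimal normal subgroup of $G$, but defines $S_i$ as the intersection of all normal subgroups projecting onto $P_i$, shows this set is closed under intersection using nonabelianness of $P_i$ (via $[X,Y]^{\Omega_i}=P_i$), and only afterwards proves $S_i$ is minimal normal by analysing $S_i^{\Omega_j}$ with Lemma~\ref{l:ITgps}. Your uniqueness argument in (a) (two minimal normal subgroups projecting onto $P_i$ would force $P_i$ to centralise itself) is sound, and your $t\mid n$ argument via $A$-equivariance is essentially the same as the paper's, but the existence step of (a) and the $S_j^{\Omega_i}=1$ step of (b) need the chief-factor/order argument that Lemma~\ref{l:ITgps} supplies, not centraliser triviality.
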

\begin{proof}
For $1\leqslant i \leqslant n$  let $P_i  \leqslant G^{\Omega_i}$ denote the unique plinth of $G^{\Omega_i}$ and recall that $K_i = G_{(\Omega_i)}$,  the kernel of the action of $G$ on $\Omega_i$. Thus for any subgroup $H$ of $G$ we have $H^{\Omega_i} = HK_i/K_i$. Further, let $E_i$ be the full preimage in $G$ of $P_i$. This means that $K_i < E_i$, and if $H \leqslant G$ is such that $H^{\Omega_i} = P_i$, then $H \leqslant E_i$. 

Let $\mathcal X_i = \{ X \unlhd G \mid X^{\Omega_i} = P_i \}$, the set of normal subgroups of $G$ that project to $P_i$. In particular, $E_i \in \mathcal X_i$, so  $\mathcal X_i \neq \emptyset$.  

\begin{claim}\label{claim:closedunderintersection}
    $\mathcal X_i$ is closed under intersection.
\end{claim}

\medskip
Let $X, Y \in \mathcal X_i$. Since $X$ and $Y$ are normal in $G$, the intersection $X \cap Y$ is normal in $G$, and moreover $[X,Y] \leqslant X \cap Y$ with $[X,Y] \unlhd G$. Now, $[X,Y]^{\Omega_i} = [X^{\Omega_i}, Y^{\Omega_i}] = [P_i,P_i] =P_i$ (as $P_i=T^s$ is nonabelian). In particular, $P_i \leqslant (X\cap Y)^{\Omega_i} \leqslant X^{\Omega_i} = P_i$, and so $X\cap Y \in \mathcal X_i$. Thus $\mathcal X_i$ is indeed closed under intersection.

\medskip

For $1\leqslant i \leqslant n$ we set:
$$S_i := \bigcap _{X \in \mathcal X_i} X.$$
Notice that $S_i \unlhd G$, and by \ref{claim:closedunderintersection}, $S_i\in\mathcal{X}_i$. Thus $S_i$ is the unique minimal element of $\mathcal{X}_i$ (by inclusion), and if $X \unlhd G$ and $X^{\Omega_i} = P_i$, then $S_i \leqslant X$. Further, since $A$ acts transitively on $\{\Omega_1, \ldots, \Omega_n\}$ and normalises $G$, we see that $A$ permutes the set $\{\mathcal{X}_1,\ldots,\mathcal{X}_n\}$ transitively, and also the set of subgroups $\{S_1,\ldots,S_n\}$ transitively. In particular, $|S_i| = |S_j|$ for all $i,j$.

\begin{claim}\label{claim:x=si or x in si cap ki}
    If $X$ is a normal subgroup of $G$ contained in $S_i$, then either  $X= S_i$ or $X \leqslant S_i \cap K_i$.
\end{claim}

\medskip

Let $X$ be such a subgroup. Then $X^{\Omega_i}$ is normal in $G^{\Omega_i}$ and $X^{\Omega_i} \leqslant S_i^{\Omega_i}$. Since $S_i /(S_i \cap K_i) = S_i^{\Omega_i} = P_i$ is a minimal normal subgroup of $G^{\Omega_i}$, either $X^{\Omega_i}=S_i^{\Omega_i}=P_i$ or $X^{\Omega_i}=1$. In the former case,    $X^{\Omega_i} = P_i$ and hence $X \in \mathcal X_i$, which implies that $S_i \leqslant X$ and thus $X=S_i$ since $X \leqslant S_i$ by assumption. In the latter case,  $X \leqslant K_i$ and so $X\leqslant S_i \cap K_i$, as required.

\begin{claim}\label{clam:projs plinth}
    For all $i,j$, either  $S_i^{\Omega_j} = 1$, or, $S_i=S_j$ and $S_i^{\Omega_j} = P_j$.
\end{claim}

\medskip

Since $S_i \cap K_j$ is normal in $G$ and contained in $S_i$, it follows from \ref{claim:x=si or x in si cap ki} that either  (a) $S_i \cap K_j = S_i$, or, (b) $S_i \cap K_j \leqslant S_i \cap K_i$. In case (a), we see 
$S_i=S_i \cap K_j \leqslant K_j$, so $S_i^{\Omega_j}=1$. Suppose now that case (b) holds. By the Third Isomorphism Theorem, 
$$P_i = S_i / (S_i \cap K_i) \cong (S_i / (S_i \cap K_j)) /( (S_i \cap K_i) / (S_i \cap K_j)).$$
This means that $S_i^{\Omega_j} \cong S_i/(S_i\cap K_j)$ is a normal subgroup of $G^{\Omega_j}$ which contains a chief factor isomorphic to $P_i \cong T^s \cong P_j$. Applying Lemma~\ref{l:ITgps} to $G^{\Omega_j}$, we see that $S_i^{\Omega_j}$ contains a plinth of $G^{\Omega_j}$. Since $G^{\Omega_j}$ has a unique plinth, namely $P_j$, it follows that there exists $M$ normal in $G$ such that $S_i \cap K_j \leqslant M \leqslant S_i$ and $M^{\Omega_j}= P_j$. By the minimality of $S_j$, we have $S_j \leqslant M \leqslant S_i$, and since $|S_i | = |S_j|$, we have $S_i = M = S_j$, and hence also $S_i^{\Omega_j} = S_j^{\Omega_j} = P_j$,   and the claim is proved.

\begin{claim} \label{claim:minnml}
    The groups $S_i$ are minimal normal subgroups of $G$.
\end{claim}

\medskip
Let $X$ be a minimal normal subgroup of $G$ with $X\leqslant S_i$. Since $X$ is nontrivial, there exists $j$ such that $X^{\Omega_j} \neq 1$. Then $1\ne X^{\Omega_j} \leqslant S_i^{\Omega_j}$, and it follows from \ref{clam:projs plinth} that $X^{\Omega_j}=S_i^{\Omega_j} = P_j$ and $S_i=S_j$.   By the minimality of $S_j$, we have $ S_j \leqslant X \leqslant S_i  = S_j$, and so $S_i=X$ and therefore $S_i$ is a minimal normal subgroup of $G$.

\medskip
An immediate consequence of \ref{claim:minnml} is that $S_i\cap K_i=1$ and  $S_i \cong S_i^{\Omega_i} = P_i \cong T^s$. Together with the uniqueness from \ref{clam:projs plinth}, this gives part (a) of the lemma. 

We define a relation $\sim$ on the set $\{1,\ldots,n\}$ as follows: $i\sim j$ if and only if $S_i = S_j$. Clearly this is an equivalence relation, and the relation is preserved by $A$. Thus, for some positive integer $t$ that divides $n$, there are exactly $t$ equivalence classes of $\sim$. After relabelling, we may assume that $S_1$, \ldots, $S_t$ are representatives of the equivalence classes.   Since each $S_i$ is a  nonabelian minimal normal subgroup of $G$, we have 
$$
S:= \langle S_1,\ldots, S_n \rangle = \langle S_1,\ldots,S_t \rangle \cong S_1 \times \cdots \times S_t
$$
and since each $S_i \cong T^s$, the group $S \cong T^{st}$ is characteristically simple.  Let $1 \leqslant i \leqslant t$. For any $1\leqslant j \leqslant t$ with $j \neq i$, we have $S_i \neq S_j$, so \ref{clam:projs plinth} implies that $S_j^{\Omega_i}=1$. Hence $S^{\Omega_i} =  S_1^{\Omega_i} \times \cdots \times S_t^{\Omega_i} = S_i^{\Omega_i}$. This completes the proof.
\end{proof}

Armed with the structure provided by Lemma~\ref{l:soc}, we can now complete our analysis of the case where $G^{\Omega_i}$ has a unique nonabelian plinth. Theorem~\ref{t:GmustbeHSHC} below depends on the Classification of the Finite Simple Groups, since it depends on the Guralnick-Saxl Theorem (see Theorem~\ref{t:guralsaxl}).

\begin{theorem}\label{t:GmustbeHSHC}
Assume that Hypothesis~\ref{h:main2} holds. Suppose that each $G^{\Omega_i}$ has a unique plinth that is  nonabelian. Then $P_A(G)\neq P_A(U)$.    
\end{theorem}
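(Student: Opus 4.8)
\textbf{Proof strategy for Theorem~\ref{t:GmustbeHSHC}.}

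The plan is to argue by contradiction: assume $P_A(G)=P_A(U)$ and derive a contradiction with the Guralnick--Saxl Theorem. First I would invoke Lemma~\ref{l:soc} to fix the structure of $S=\Soc$-like section: after relabelling, $S=S_1\times\cdots\times S_t\cong T^{st}$ with $S$ normal in $A$ (it is characteristic in the subgroup generated by \emph{all} the $S_i$, which $A$ permutes), and $S^{\Omega_i}=S_i^{\Omega_i}\cong S_i\cong T^s$ for $1\leqslant i\leqslant t$, while $S_j^{\Omega_i}=1$ for $j\neq i$ in the range $1\leqslant i,j\leqslant t$. Since $S\trianglelefteq A$ and $S\leqslant G$, Lemma~\ref{l:P_A(S) = P_A(UcapS)} applies and gives $P_A(S)=P_A(U\cap S)$. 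So it suffices to show $P_A(S)\neq P_A(U\cap S)$, i.e.\ that $U\cap S$ cannot meet every $A$-class of prime-power-order elements of $S\cong T^{st}$.

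The key step is to understand the $A$-action on $S$ and on the $t$ simple factors (or rather the $t$ blocks $S_i\cong T^s$). Because $A$ permutes $\{S_1,\dots,S_t\}$ transitively, $A$ induces a transitive subgroup of $\Sym(t)$, and since $|A:G|=n$ and $t\mid n$, the kernel of this action restricted to how $A$ normalises each $S_i$ is controlled by $|A:G|$. Concretely, for each $i\leqslant t$, the normaliser $N_A(S_i)$ acts on $S_i\cong S_i^{\Omega_i}$, and this action factors through the action of $G^{\Omega_i}$ together with the finite piece $N_A(S_i)/N_G(S_i)$ whose order is bounded in terms of $n$. The point of the Guralnick--Saxl Theorem is that for the \emph{single} factor $T$, viewing $T\leqslant A_T\leqslant\Aut(T)$, no proper subgroup of $T$ meets every $A_T$-class of prime-power-order elements. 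I would want to push this: look at a single direct factor isomorphic to $T$ inside $S$, project $U\cap S$ into it, and show that if this projection is proper then some $A$-class of a prime-power-order element (supported on that factor, trivial elsewhere) is missed --- using Guralnick--Saxl to see the projection must be all of $T$, hence $U\cap S$ projects onto each factor; but then a counting/diagonal argument (the number of $A$-classes of a suitable prime-power-order element of $T^{st}$ versus the number such that can lie in a proper subdirect subgroup, compared across the bounded number $n$ of "twists") forces $U\cap S=S$, contradicting that $U$ is proper (here one uses that $U$ is core-free, so $U\cap S\neq S$ unless $S=1$, which it is not).

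The main obstacle --- and the delicate part of the argument --- is controlling the subdirect structure of $U\cap S$ in $S\cong T^{st}$ together with the $A$-conjugacy: a proper subgroup of $T^{st}$ can still meet many conjugacy classes, so the contradiction cannot come from a single factor in isolation but must exploit that $A$ has only $n$ cosets over $G$ and that $G^{\Omega_i}$ has a \emph{unique} plinth (this rigidity is exactly what prevents $U\cap S$ from being spread out as a "graph" subgroup absorbing all classes). I expect the proof to single out a prime $p$ for which $T$ has a $p$-element $x$ whose $\Aut(T)$-class behaves well, consider the element of $S$ equal to $x$ in one coordinate block and $1$ elsewhere, and show its $A$-class cannot meet $U\cap S$ unless the relevant projection of $U\cap S$ is all of the corresponding $T$-factor; iterating over coordinates and using Guralnick--Saxl at each stage forces $U\cap S$ to be a full subdirect product, and then a final argument (again via Guralnick--Saxl applied to the "diagonal" identifications, or via the fact that the $\Omega_i$-kernels $K_i$ intersect $S$ trivially for $i\leqslant t$) yields $U\cap S=S$, the desired contradiction.
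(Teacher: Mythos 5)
Your setup (defining $S$ via Lemma~\ref{l:soc}, noting $S\trianglelefteq A$, and reducing to $P_A(S)=P_A(U\cap S)$ via Lemma~\ref{l:P_A(S) = P_A(UcapS)}) matches the paper, and the intended endpoint --- a contradiction with the Guralnick--Saxl Theorem --- is the right one. But the mechanism you propose for reaching it has a genuine gap. Your test elements ``equal to $x$ in one coordinate block and $1$ elsewhere'' carry no information: since $U\supseteq K_1$ and $S\cap K_1=S_2\times\cdots\times S_t$, the subgroup $U\cap S$ contains the whole blocks $S_2,\dots,S_t$, so the $A$-class of any single-block-supported element meets $U\cap S$ simply by conjugating its support into one of those blocks; no constraint on any projection of $U\cap S$ results. (Relatedly, your claim that the kernels $K_i$ intersect $S$ trivially for $i\leqslant t$ is false for $t\geqslant 2$; only $S_i\cap K_i=1$ holds.) Your final target is also off: full projections of $U\cap S$ onto every simple factor do not force $U\cap S=S$ (diagonal subgroups are the obvious obstruction, which you acknowledge but never actually rule out --- the promised ``counting/diagonal argument'' over the $n$ cosets is not supplied and is not how the contradiction arises).

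The paper's proof avoids both problems with two ingredients missing from your outline. First, because $U\supseteq K_1$ and $U$ is core-free, $U\cap S$ splits along the block structure: $U\cap S=(U\cap S_1)\times(S\cap K_1)$ with $U\cap S_1$ a \emph{proper} subgroup of $S_1$, so there is no subdirect behaviour across blocks to control at all; the problem is concentrated in the single block $S_1$. Second, to transfer the covering hypothesis into that block one takes $y_1\in S_1$ of prime power order and builds $y=(y_1^{a_1},\dots,y_1^{a_t})\in S$ whose entry in \emph{every} block is a prescribed $A$-conjugate of $y_1$ (via a transversal with $S_1^{a_i}=S_i$); then for $a\in A$ with $y^a\in U\cap S$, whichever block $a$ sends to $S_1$, the composite $a_ia$ lies in $N_A(S_1)$ and conjugates $y_1$ into $U\cap S_1$. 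This yields $P_{N_A(S_1)}(S_1)=P_{N_A(S_1)}(U\cap S_1)$, and a second similar descent (using $G=US_1$, so that $U$ permutes the simple factors of $S_1$ transitively) yields $P_{N_A(T_1)}(T_1)=P_{N_A(T_1)}(U\cap T_1)$ with $U\cap T_1<T_1$, which is exactly the configuration Guralnick--Saxl forbids. Without the splitting of $U\cap S$ and the ``fill every block'' element, your argument cannot be completed as stated.
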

\begin{proof}

We will assume for a contradiction that $P_A(G)=P_A(U)$. Let $S=S_1\times \cdots \times S_t$ be the subgroup defined by Lemma~\ref{l:soc}. By that same lemma, $S_1$ is the unique minimal normal subgroup of $G$ contained in $S$ such that $S^{\Omega_1}=S_1^{\Omega_1}$. In particular, $S_j^{\Omega_1}=1$ for all $j$ such that $2\leqslant j\leqslant t$. Hence $S \cap K_1 = S_2 \times \cdots \times S_t$.

    By the definition of $\Omega_1$, the group $U$ contains $K_1$, and since we are assuming that $\Core_A(U)=1$, it follows that $U$ does not contain $S_1$. Hence $U\cap S=(U\cap S_1)\times K_1$ and $U\cap S_1$ is a proper subgroup of $S_1$.  
Also,  $A$ permutes transitively  by conjugation the minimal normal subgroups $S_i$ of $G$ that are contained in $S$. 
For $1\leqslant i\leqslant t$, choose an element $a_i\in A$ such that $S_1^{a_i}=S_i$, and take $a_1=1$. Let $A_1 = N_A(S_1)$.
The conjugation action of $A$ on $S$ induces a homomorphism $\varphi: A\to \Aut(S)$ with the property that each element of $A$ induces an automorphism which permutes the $S_i$ and hence lies in $\Aut(T^s)\wr \Sym(t)$. 
We identify each of the $S_i$ with $T^s$ so that we may write 
\begin{align}\label{eq:desc ai}
\varphi(a_i)=(\alpha_{i1},\dots,\alpha_{it})\sigma_i \text{ with each } \alpha_{ij}\in\Aut(T^s) \text{ and }\sigma_i\in \Sym(t) \text{ such that }1^{\sigma_i}=i. 
\end{align}

\begin{claim}\label{claim:pa1s1=pa1ucaps1}
    $P_{A_1}(S_1)=P_{A_1}(U\cap S_1)$.
\end{claim}  

Clearly $P_{A_1}(U \cap S_1) \subseteq P_{A_1}(S_1)$, so we just need to prove the reverse inclusion. Let $y_1 \in S_1$ be of prime power order. 
Then, for $i=1,\dots, t$, the element $y_i:= y_1^{a_i}\in S_i$, and from the form of $\varphi(a_i)$ given in \eqref{eq:desc ai} we see that $y_i$ is the element $y_1^{\alpha_{i1}}$ of $T^s$. In particular $|y_i|=|y_1|$, so $y:=(y_1,y_2,\dots,y_t)\in S$ has prime power order $|y|=|y_1|$. 
Since  $P_A(G)=P_A(U)$, there exists $a\in A$ such that $y^a\in U$, and since $S$ is $A$-invariant, $y^a\in U\cap S$. As we saw above, $U\cap S$ contains $K_1=S_2\times\dots\times S_t$, and hence $y^a\in U\cap S$ if and only if its first entry  lies in $U\cap S_1$. Now $\varphi(a)=(\beta_1,\dots,\beta_t)\sigma$ for
some $\beta_i\in\Aut(T^s)$ and $\sigma\in \Sym(t)$. Suppose that $i^\sigma=1$, that is $S_i^a=S_1$. Then the first entry of $y^a$ is  
\[
y_{1\sigma^{-1}}^{\beta_{1\sigma^{-1}}} = y_i^{\beta_i},\ \mbox{which is the element 
$y_1^{\alpha_{i1}\beta_i}$ of $T^s$,}
\]
and hence $y_1^{\alpha_{i1}\beta_i}\in U\cap S_1$. Note that the element $a_ia\in A$ satisfies $S_1^{a_ia}=(S_1^{a_i})^a=S_i^a=S_1$, that is to say,  $a_ia\in A_1$, and
\[
\varphi(a_ia)=(\alpha_{i1},\dots,\alpha_{it})\sigma_i \cdot
(\beta_1,\dots,\beta_t)\sigma = 
(\alpha_{i1}\beta_{1\sigma_i},\dots,\alpha_{it}\beta_{t\sigma_i})\sigma_i\sigma.
\]
Since $1^{\sigma_i}=i$, this implies that $(y_1, 1,\dots,1)^{a_ia}=(y_1^{\alpha_{i1}\beta_i},1,\dots,1)$, and lies in $U \cap S$, with $a_ia\in A_1$. 
Removing the $t$-tuple notation this says that $y_1^{a_ia}\in U\cap S_1$. 
Thus $P_{A_1}(S_1)=P_{A_1}(U\cap S_1)$ as claimed.

\medskip

Now we write $S_1=T_1\times \dots \times T_s=T^s$. Since $S_1$ is transitive on $\Omega_1$ we have $G=US_1$ and since $S_1$ is a minimal normal subgroup of $G$, it follows that $U$ acts transitively on $\{T_1,\dots, T_s\}$ by conjugation. Since $U \cap S_1$ is a proper subgroup of $S_1$, after relabelling the $T_i$ if necessary, we may assume that $U\cap T_1$ is a proper subgroup of $T_1$.

\begin{claim}\label{claim:a11 on u cap t1}
    $P_{A_{11}}(T_1)=P_{A_{11}}(U\cap T_1)$, where $A_{11}=N_A(T_1)$.
\end{claim}  

By definition $P_{A_{11}}(U\cap T_1)\subseteq P_{A_{11}}(T_1)$, and to prove the reverse inclusion let $y\in T_1$ be an element of prime power order, and identify $y$ with the $s$-tuple  $(y,1,\dots,1)\in S_1$. Since $P_{A_1}(S_1)=P_{A_1}(U\cap S_1)$ by \ref{claim:pa1s1=pa1ucaps1}, there exists $a\in A_1$ such that $(y,1,\dots,1)^a\in U\cap S_1$. Now $T_1^a=T_i$ for some $i\leqslant s$, and there exists $u\in U$ such that $T_i^u=T_1$ since $G=US_1$. Thus $T_1^{au}=T_1$, that is $au\in A_{11}$, and therefore $(y,1,\dots,1)^{au}\in T_1$. Also  $(y,1,\dots,1)^{au}\in (U\cap S_1)^u=U\cap S_1$, so $(y,1,\dots,1)^{au}\in U\cap T_1$. Thus  $P_{A_{11}}(T_1)=P_{A_{11}}(U\cap T_1)$ and \ref{claim:a11 on u cap t1} is proved.

\medskip

 As noted above, $U\cap T_1$ is a proper subgroup of $T_1$, and $T_1$ is a finite nonabelian simple group. Let $\rho : A_{11}\rightarrow \aut(T_1)$ be the group homomorphism induced by the conjugation action on $T_1$.  Since $\rho$ restricted  to $T_1$ is a monomorphism, we have $$\rho(U \cap T_1) < \rho (T_1) \unlhd \rho(A_1) \leqslant \aut(T_1).$$ 
 The equality $P_{A_{11}}(T_1) = P_{A_{11}}(U \cap T_1)$ implies that $P_{\rho(A_{11})}(\rho(T_1)) = P_{\rho(A_{11})}(\rho(U \cap T_1))$    which 
is a contradiction to  the Guralnick--Saxl Theorem (Theorem~\ref{t:guralsaxl}).   This final contradiction completes the proof of Theorem~\ref{t:GmustbeHSHC}.
\end{proof}

\subsection{Two transitive minimal normal subgroups}

We  now assume that each $G^{\Omega_i}$ has two plinths, that is, two transitive minimal normal subgroups. This means that $G^{\Omega_i}$ is a primitive group (see Theorem~\ref{t:case div of IT}) and $\soc(G^{\Omega_i})\cong T^\ell$ for some finite nonabelian simple group $T$ and positive integer $\ell$. Our first step is to analyse the action of $\Soc(G)$ on $\Omega$
and in particular the action of $\soc(G)$ on the different $G$-orbits $\Omega_1$, \ldots, $\Omega_n$.
Recall that, under a group homomorphism,  the image of a minimal normal subgroup of $G$ is either trivial or a minimal normal subgroup of the image of $G$. In our context, each minimal normal subgroup of $G$
acts either trivially or faithfully, on each ${\Omega_i}$. In particular, $\soc(G)^{\Omega_i} \leqslant \soc(G^{\Omega_i})$. Further, since $\soc(G)$ acts faithfully on $\Omega$, we have  $\soc(G) \lesssim \prod_{i=1}^n \soc(G^{\Omega_i}) \cong T^m$, for some finite nonabelian simple group $T$ and $m\geqslant 1$. In the next lemma, we sharpen this description.

\begin{lem}\label{l:2min}
  Suppose that $G^{\Omega_i}$ is a primitive group with two minimal normal subgroups. Then   for each $1\leqslant i \leqslant n$, there are distinct minimal normal subgroups $L$ and $R$ of $G$ such that   $$\Soc(G^{\Omega_i}) = L^{\Omega_i} \times R^{\Omega_i}=\Soc(G)^{\Omega_i}$$ with $L\cong R \cong T^s$ for some finite nonabelian simple group $T$ and $s\geqslant 1$.
\end{lem}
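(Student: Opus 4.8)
The goal is to identify, for a fixed index $i$, two \emph{distinct} minimal normal subgroups $L, R$ of $G$ whose images in $G^{\Omega_i}$ are precisely the two plinths of $G^{\Omega_i}$, and whose product realises the whole socle of $G^{\Omega_i}$. The strategy mirrors the proof of Lemma~\ref{l:soc}, but now with two plinths $P_i, Q_i$ in play instead of one. Fix $i$ and let $P_i, Q_i$ be the two distinct plinths of $G^{\Omega_i}$; by Theorem~\ref{t:case div of IT}, $P_i\cong Q_i\cong T^s$ for some nonabelian simple $T$ and $s\geqslant 1$, and both are regular, so $\Soc(G^{\Omega_i}) = P_i\times Q_i$ (their regularity and distinctness force trivial intersection and product equal to the socle). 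I would first run the argument of Claims 1--4 of Lemma~\ref{l:soc} separately for $P_i$ and for $Q_i$: the family $\mathcal X_i^P = \{X\unlhd G\mid X^{\Omega_i} = P_i\}$ is nonempty (the preimage of $P_i$ works) and closed under intersection (since $[X,Y]^{\Omega_i} = [P_i,P_i]=P_i$ as $P_i$ is nonabelian perfect), so it has a unique minimal member $L$; likewise define $R$ as the unique minimal member of $\mathcal X_i^Q$. The analogue of Claim~\ref{claim:minnml} then shows $L$ and $R$ are minimal normal subgroups of $G$ with $L\cap K_i = R\cap K_i = 1$, hence $L\cong L^{\Omega_i} = P_i$ and $R\cong R^{\Omega_i}=Q_i$, both isomorphic to $T^s$.

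The next step is to check $L\neq R$ and that $L^{\Omega_i}\times R^{\Omega_i} = \Soc(G^{\Omega_i})$. Distinctness is immediate: if $L = R$ then $L^{\Omega_i}$ would equal both $P_i$ and $Q_i$, contradicting $P_i\neq Q_i$. For the product: $L^{\Omega_i}\times R^{\Omega_i} = P_i\times Q_i = \Soc(G^{\Omega_i})$ by the structure of primitive groups with two minimal normal subgroups recalled just before the lemma. The only remaining identity to establish is $\Soc(G^{\Omega_i}) = \Soc(G)^{\Omega_i}$. The inclusion $\Soc(G)^{\Omega_i}\leqslant \Soc(G^{\Omega_i})$ is the general fact quoted before the lemma (image of socle lands in socle). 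For the reverse inclusion, note $L,R\leqslant \Soc(G)$ since they are minimal normal subgroups of $G$, hence $\Soc(G^{\Omega_i}) = L^{\Omega_i}\times R^{\Omega_i}\leqslant \Soc(G)^{\Omega_i}$. Combining gives equality, and the lemma is proved.

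\textbf{Main obstacle.} The one place needing genuine care is the analogue of Claim~\ref{claim:x=si or x in si cap ki} / Claim~\ref{claim:minnml} in this two-plinth setting: when I take a minimal normal subgroup $X$ of $G$ with $X\leqslant L$, I want to conclude $X = L$. Since $L^{\Omega_i} = P_i$ is a plinth (minimal normal in $G^{\Omega_i}$), the image $X^{\Omega_i}$ is either $P_i$ or trivial. If $X^{\Omega_i}=P_i$ then $X\in\mathcal X_i^P$ so $L\leqslant X\leqslant L$, done. If $X^{\Omega_i}=1$ then $X\leqslant L\cap K_i$; here I need $L\cap K_i = 1$, which I get exactly as in the proof of Lemma~\ref{l:soc} via the analogue of Claim~\ref{clam:projs plinth} — showing that if $L^{\Omega_j}\neq 1$ for some $j$ then a plinth of $G^{\Omega_j}$ sits inside $L^{\Omega_j}$ (using Lemma~\ref{l:ITgps}, whose hypothesis $K/L\cong T^s$ is met since $P_i\cong T^s$), forcing $L$ minimal and hence $L\cap K_i = 1$ by faithfulness of $G$ on $\Omega$. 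So there is a little bookkeeping to port the chain of claims in Lemma~\ref{l:soc} over to the two-plinth case, but no new idea is required; I would either reprove the needed claims briefly or, better, factor out the common core of Lemma~\ref{l:soc} as a standalone statement (``each plinth of $G^{\Omega_i}$ is the image of a unique minimal normal subgroup of $G$ that is faithful on $\Omega_i$'') and cite it for both $P_i$ and $Q_i$.
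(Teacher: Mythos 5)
Your plan breaks down at exactly the point you dismiss as ``bookkeeping''. The step of Lemma~\ref{l:soc} you want to port, namely \ref{clam:projs plinth}, uses the \emph{uniqueness} of the plinth of $G^{\Omega_j}$ in an essential way: once Lemma~\ref{l:ITgps} shows that the image of $S_i$ on $\Omega_j$ contains a plinth of $G^{\Omega_j}$, that plinth must be $P_j$, the very subgroup with respect to which $S_j$ was defined as the minimal projecting normal subgroup, and only then does minimality give $S_j\leqslant S_i$, with equality of orders finishing the claim. In the two-plinth setting this identification fails: if your $L$ (minimal among normal subgroups of $G$ projecting onto the plinth $P_i$ of $G^{\Omega_i}$) has nontrivial image on another orbit $\Omega_j$, the minimal normal subgroup of $G^{\Omega_j}$ inside $L^{\Omega_j}$ may be the \emph{other} plinth of $G^{\Omega_j}$, and then minimality of the ``$L$-type'' subgroup attached to $\Omega_j$ gives nothing, while comparing with the ``$R$-type'' subgroup is unavailable because you do not yet know the two families have equal orders --- nor even that there is a consistent $A$-invariant labelling of ``$L$'' versus ``$R$'' across the $n$ orbits. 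So the phrase ``forcing $L$ minimal'' in your obstacle paragraph is precisely the unproved step; it is not obtained by rerunning the old argument, and this is where a new idea is in fact required.

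The paper's proof supplies three ingredients missing from your sketch. First, it fixes a transversal and checks that the labelling of the two plinth-preimages $E_{i,L},E_{i,R}$ is independent of the transversal and $A$-equivariant, so that $A$ permutes each family $\{L_1,\ldots,L_n\}$ and $\{R_1,\ldots,R_n\}$ transitively; this is what makes $|L_i|=|L_j|$ and $|R_i|=|R_j|$ usable. Second, it obtains minimality of one family essentially for free by starting from an actual minimal normal subgroup $X$ of $G$, whose image on some orbit is one of the two plinths, and relabelling so that this family is the ``$L$'' one (\ref{claim:L_i min nml}). Third, for the other family it invokes the double-centraliser property $C_{G^{\Omega_j}}(P_{j,R})=P_{j,L}$ to show the only possible nontrivial projections of $R_i$ are $P_{j,R}$ or $P_{j,L}$ (\ref{claim:proj to min nml}), and the cross-over case $R_i^{\Omega_j}=P_{j,L}$ is handled by a separate argument (\ref{claim:cover L}) via the splitting $R_i=L_j\times(R_i\cap K_j)$, which depends on $L_j$ already being known to be a minimal normal subgroup acting faithfully on $\Omega_j$. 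Your suggestion to ``factor out the common core of Lemma~\ref{l:soc}'' cannot work as stated, because that common core genuinely uses plinth-uniqueness; the two-plinth case needs the additional machinery above.
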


\begin{proof}
Recall that $K_i$ is the kernel of the action of $G$ on $\Omega_i$. Thus each $K_i$ is normal in $G$ and $A$ acts transitively on the set $\{K_1,\ldots,K_n\}$. Since $G^{\Omega_1}$ contains exactly two minimal normal subgroups, by \cite[Corollary 3.11]{PS}, we may label these two minimal normal subgroups as $P_{1,L}$ and $P_{1,R}$ and we have  $P_{1,L}\cong P_{1,R} \cong T^s$, for some finite nonabelian simple group $T$ and positive integer $s$. Moreover, $P_{1,L}$ and $P_{1,R}$ are the left and right regular actions of $T^s$, respectively, and the \emph{double-centraliser} property holds: $C_{G^{\Omega_1}}(P_{1,L}) = P_{1,R}$ and $C_{G^{\Omega_1}}(P_{1,R}) = P_{1,L}$ (see \cite[Theorem 3.6(iii)]{PS}. We define $E_{1,L}$ and $E_{1,R}$ to be the full preimages in $G$ of $P_{1,L}$ and $P_{1,R}$ respectively. In particular, each of $E_{1,L}$ and $E_{1,R}$ is normal in $G$ and for any subgroup $H$ of $G$ with $H^{\Omega_i} = P_{1,L}$, we have $H \leqslant E_{1,L}$. Similarly for $E_{1,R}$.
Let 
\begin{equation}\label{eq:transversal}
    \mathcal T = \{a_1,\ldots,a_n\} 
\end{equation} 
be a transversal to $G$ in $A$ labelled such that $\Omega_1^{a_i} = \Omega_i$ for each $i$. In particular, we have $a_1 \in G$. For each $1\leqslant i \leqslant n$, we define
$$E_{i,L}:=(E_{1,L})^{a_i} \quad \text{and} \quad E_{i,R}:=(E_{1,R})^{a_i}.$$

\begin{claim}
The definitions of $E_{i,L}$ and $E_{i,R}$ do  not depend on the choice of transversal $\mathcal T$ and   $A$ acts transitively, by conjugation, on the sets $\{E_{1,L},\ldots,E_{n,L}\}$ and $\{E_{1,R},\ldots,E_{n,R}\}$.    
\end{claim}

Suppose that $\mathcal T' = \{a_1',\ldots,a_n'\}$ is another transversal to $G$ in $A$, again labelled so that $\Omega_1^{a_i'}=\Omega_i$. Since $\Omega_i=\Omega_1^{a_i}$ and $A$ acts regularly on $\{\Omega_1,\ldots,\Omega_n\}$ with stabiliser $G$, we have  $a_i' = g_i a_i$ for some $g_i\in G$. Hence $(E_{1,L})^{a_i'} = ((E_{1,L})^{g_i})^{a_i} = (E_{1,L})^{a_i} = E_{i,L}$, and similarly  $(E_{1,R})^{a_i'} =  E_{i,R}$. Thus the definition of $E_{i,L}$ and $E_{i,R}$ is indeed independent of the choice of transversal.
%
%
To prove the second assertion, let $a\in A$ and $i$ be arbitrary and consider $(E_{i,L})^a$. Let $j$ be such that $\Omega_i^a = \Omega_j$. Now $\Omega_i = \Omega_1^{a_i}$ and $\Omega_j=\Omega_1^{a_j}$. Thus $\Omega_1^{a_i a a_j^{-1} } = \Omega_1$. Since $A$ acts regularly on $\{\Omega_1,\ldots,\Omega_n\}$ with stabiliser $G$, there exists $g\in G$ such that $a_i a a_j^{-1} = g$, whence $a = a_i^{-1} g a_j$. Then, since $E_{1,L}$ is normal in $G$, 
$$
(E_{i,L})^a = (E_{i,L})^{a_i^{-1}g a_j} = ((E_{1,L})^g)^{a_j} = (E_{1,L})^{a_j} = E_{j,L}.
$$
It follows that $A$ acts transitively on the set $\{E_{1,L},\ldots,E_{n,L}\}$ by conjugation. Exactly the same argument shows that $A$ acts transitively on the set $\{E_{1,R},\ldots,E_{n,R}\}$ by conjugation.

\medskip

The previous claim allows us to define 
$$\mathcal L _i : = \{ X \unlhd G \mid X^{\Omega_i} = (E_{i,L})^{\Omega_i} = P_{i,L} \} \quad \text{and}\quad \mathcal R_i := \{ X \unlhd G \mid X^{\Omega_i} = (E_{i,R})^{\Omega_i} = P_{i,R} \}$$
and we note that each of the sets $\{\LL_1,\ldots,\LL_n\}$ and $\{\R_1,\ldots,\R_n\}$ is permuted transitively by $A$. Furthermore, $E_{i,L} \in \mathcal L_i$ and $E_{i,R} \in \mathcal R_i$, so that $\LL_i , \R_i$ are both non-empty.

\begin{claim}
    For each $i$, the sets $\mathcal L_i$ and $\R_i$ are closed under intersection.
\end{claim}

The proof is the same as the proof of  \ref{claim:closedunderintersection}  in the proof of Lemma~\ref{l:soc} and so we omit it here.

\medskip

For each $i$ we set $L_i := \bigcap_{X \in \LL_i} X$ and $R_i := \bigcap_{X\in \R_i} X$, and note that $A$ permutes each of the  sets $\{L_1,\ldots,L_n\}$ and $\{R_1,\ldots,R_n\}$ transitively. Hence $|L_i|=|L_j|$ for all $i,j$ and $|R_i|=|R_j|$ for all $i,j$. Furthermore, since $L_i\in\LL_i$ and $R_i\in\R_i$, we have  $L_i^{\Omega_i}=P_{i,L}$ and $R_i^{\Omega_i}=P_{i,R}$ so, in particular, $L_i \neq  R_i$, and both $L_i$ and $R_i$ are nontrivial, for all $i$.

\begin{claim}\label{claim:L_i min nml}
After possibly swapping the labels $L$ and $R$, we may assume that, for all $i$, $L_i$ is a minimal normal subgroup of $G$.
\end{claim}

Let $X$ be a  minimal normal subgroup of $G$. Since $G$ acts faithfully on $\Omega$, $X$ must act nontrivially on some $\Omega_i$, that is, $X^{\Omega_i}\neq 1$ and hence $X^{\Omega_i}$ is a minimal normal of $G^{\Omega_i}$. Thus $X^{\Omega_i} = P_{i,L}$ or $P_{i,R}$. In the first case, we have  $X \in \LL_i$, and so $L_i \leqslant X$, which implies $L_i=X$. In this case, for each $j\leq n$, there is an element $a\in A$ such that $\Omega_i^a=\Omega_j$, and the conjugate $X^a$ is a minimal normal subgroup of $G$ lying in $\LL_i^a=\LL_j$, which implies that $L_j\leq X^a$ and hence $L_j=X^a$.   In the second case, a similar argument yields that, for each $j$, $R_j$ is some $A$-conjugate of $X$. This proves the claim, after interchanging the labels $L$ and $R$ in the second case.

\begin{claim}\label{claim:proj to min nml}
    For all $i,j$, one of  the following holds:
    \begin{center}
        (a) $R_i^{\Omega_j}=1$,\quad or (b) $R_i^{\Omega_j}=P_{j,R}$ and $R_i=R_j$,\quad or (c) $R_i^{\Omega_j}=P_{j,L}$.
    \end{center}
\end{claim}

To prove the claim, we may assume $R_i^{\Omega_j} \neq 1$ (i.e.~part (a) does not hold). Thus $R_i^{\Omega_j}$ contains one of the two minimal normal subgroups of $G^{\Omega_j}$.  Suppose first that $R_i^{\Omega_j} \geqslant P_{j,R}$. By the Correspondence Theorem, we may choose $R_0 \leqslant R_i$ with $R_0$ normal in $G$ such that $R_0^{\Omega_j}=P_{j,R}$. Then $R_0 \in \mathcal R_j$ and so by the minimality of $R_j$, we have $R_j \leqslant R_0 \leqslant R_i$. The equality $|R_i|=|R_j|$ implies that $R_i=R_j$ and  $R_i^{\Omega_j}=R_j^{\Omega_j} = P_{j,R}$, as in part (b). Now  we may assume that  $R_i^{\Omega_j} \ngeqslant P_{j,R}$. The fact that $P_{j,R}$ is a minimal normal subgroup of $G^{\Omega_j}$ combined with the fact that $R_i^{\Omega_j}$ is normal in $G^{\Omega_j}$, implies that  $R_i^{\Omega_j}\cap P_{j,R} = 1$, and hence $R_i^{\Omega_j} \leqslant C_{G^{\Omega_j}}(P_{j,R})=P_{j,L}$. This implies that $R_i^{\Omega_j} = P_{j,L}$, as in part (c).


\begin{claim}\label{claim:proj to R}
    Fix $i\leq n$. If, for all $j$,~\ref{claim:proj to min nml}(a) or (b) holds, that is, for all $j$ we have that $R_i^{\Omega_j} =1$ or $P_{j,R}$, then $R_i$ is a minimal normal subgroup of $G$.
\end{claim}

Let $X$ be a minimal normal subgroup of $G$ contained in $R_i$. Now $X$ must act nontrivially on $\Omega_j$ for some $j$, so we have $1\neq X^{\Omega_j} \leqslant R_i^{\Omega_j}$. Then by the assumptions in this claim, $X^{\Omega_j}=R_i^{\Omega_j}=P_{j,R}$. The minimality of $R_j$  means that $R_j \leqslant X \leqslant R_i$, and since $|R_j|=|R_i|$, we have that $R_i=X=R_j$. In particular, $R_i$ is a minimal normal subgroup of $G$.

\medskip

\begin{claim}\label{claim:cover L}
    Fix $i\leq n$. If~\ref{claim:proj to min nml}(c) holds  for some $j$, that is, $R_i^{\Omega_j}=P_{j,L}$, then $R_i$ is a minimal normal subgroup of $G$.
\end{claim}

Suppose that $R_i^{\Omega_j}=P_{j,L}$.
By the minimality of $L_j$ we have $L_j \leqslant R_i$ and since $L_j$ is a minimal normal subgroup of $G$ (by \ref{claim:L_i min nml}) and $L_j$  acts faithfully on $\Omega_j$, we obtain
$$R_i = L_j \times (R_i \cap K_j).$$
Now consider $L_j^{\Omega_i}$. Since $L_j$ is normal in $G$, and $L_j \leqslant R_i$, we have $L_j^{\Omega_i} = 1$ or $L_j^{\Omega_i}=R_i^{\Omega_i}=P_{i,R}$. In the latter case, this means that $L_j \in \R_i$, and so by the minimality of $R_i$, we obtain $R_i \leqslant L_j$, whence $R_i=L_j$, as required. So we may suppose that $L_j^{\Omega_i} = 1$. This implies that 
$$
(R_i  \cap K_j)^{\Omega_i}=R_i^{\Omega_i}=P_{i,R}
$$
and by minimality of $R_i$, we have $R_i = R_i \cap K_j$, which means that $L_j=1$, a contradiction.

\medskip

By~\ref{claim:proj to min nml}, for each $i$, the assumptions of one of  \ref{claim:proj to R} or \ref{claim:cover L} holds. We therefore conclude that $R_i$ 
is a minimal normal subgroup of $G$ for each $i \leqslant n$.
Since ~\ref{claim:L_i min nml} shows that each $L_i$ is a minimal normal subgroup of $G$, and $R_i\neq L_i$, we have $L_i \times R_i \leqslant \soc(G)$.
Moreover $R_i^{\Omega_i} = P_{i,R} \neq P_{i,L} = L_i^{\Omega_i}$ so $\soc(G^{\Omega_i}) = L_i^{\Omega_i} \times R_i^{\Omega_i} = (L_i \times R_i)^{\Omega_i} \leqslant  \Soc(G)^{\Omega_i}.$ On the other hand, $\soc(G)^{\Omega_i} \leqslant \Soc(G^{\Omega_i})$, and so $\soc(G^{\Omega_i}) = \Soc(G)^{\Omega_i}$, and the proof is complete.
\end{proof}

It follows from Lemma~\ref{l:2min} that each minimal normal subgroup of $G$ is isomorphic to $T^s$, for some finite nonabelian simple group $T$ and fixed positive integer $s$. Thus, if $G$ has exactly $t$ minimal normal subgroups $S_1, \ldots, S_t$, then 
$$S:=\Soc(G) = S_1 \times \cdots \times S_t  \cong T^m$$
with $m=st$. We may assume that
$S^{\Omega_1}=S_1\times S_2$.
Further, by \cite[Theorem 3.6]{PS}, each of $S_1$ and $S_2$ acts regularly on $\Omega_1$, and hence
\begin{equation}\label{e:index}
    |G:U|=|\Omega_1|=|S_1|=|S_2|=|T|^s.
\end{equation}
Recall that our task is to find an upper bound for $|G:U|$ that is a function of $n=|A:G|$ when $P_A(G)=P_A(U)$. Thus we need to bound both $s$ and $|T|$ by functions of $n$. To do this we exploit the new upper bound on $|T|$ in terms of $m(T)$ (see \eqref{mT}) given  by Theorem~\ref{t:mT}. 
The following crucial lemma shows how to bound $s$ and $m(T)$ in terms of $n$. The ideas in the proof come from \cite[Proposition 3.1]{CP91}, which originated in \cite[Proposition 2.2]{CP89}. They involve,  among other things, the notion of vertex-colourings of graphs discussed in Section~\ref{s:prelim}. First we set out some notation.

 Recall that $U=A_\alpha=G_\alpha$, the stabiliser  of the point $\alpha=U\in\Omega$, and by \cite[Section 3.3]{PS}, $(U\cap S)^{\Omega_1}$ is a diagonal subgroup of $S^{\Omega_1} = S_1\times S_2$, and we may assume that 
\begin{equation}\label{e:D}
    U\cap S = D\times \prod_{2 < i \leqslant t} S_i,\quad \text{where}\quad D = \{(x,x)\mid x\in T^s\} < S_1\times S_2, 
\end{equation}
that is, $D$ is the `straight diagonal' subgroup of $S_1\times S_2$. 


\begin{lemma}\label{l:2min2} 
Suppose that $P_A(G)=P_A(U)$.
     Then, with  $m(T)$ as in \eqref{mT}, 
     $$\max\{ s, m(T)-1\}\leqslant s\cdot (m(T)-1)\leqslant n.$$
\end{lemma}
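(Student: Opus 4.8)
The plan is to exploit the $A$-action on the set $\{S_1,\dots,S_t\}$ of minimal normal subgroups of $G$, together with the diagonal structure of $U\cap S$ recorded in \eqref{e:D}, to encode the constraint $P_A(G)=P_A(U)$ as a graph-colouring problem. First I would set up a graph $\Gamma$ whose vertex set is $\{1,\dots,s\}$ (indexing the simple direct factors $T_1,\dots,T_s$ inside a single $S_j\cong T^s$), and whose edges record, for a carefully chosen family of prime-power elements $y\in S$, which pairs of coordinates are "linked" by the requirement that some $A$-conjugate of $y$ must land in $U\cap S$. The point is that an element $y=(y_1,y_2,\dots)\in S_1\times S_2\times\cdots$ is $A$-conjugate into $U\cap S=D\times\prod_{i>2}S_i$ only if, after permuting the $S_i$ by the $\Sym(t)$-part of the relevant automorphism, the first two blocks become equal in $T^s$; this forces coordinatewise conjugacy relations in $T$ between the blocks, and the $\Sym(s)$-part of the automorphisms can only rearrange the $s$ coordinates within a block. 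So the failure of $y$ to be coverable is controlled by how many distinct $T$-conjugacy-class "patterns" appear among its coordinates.

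The key steps, in order, would be: (1) Fix a prime $p$ maximising the number of $\Aut(T)$-classes of $p$-power-order elements, so there are $m(T)$ such classes; pick representatives $c_1,\dots,c_{m(T)}$. (2) Build test elements of $S$ supported on blocks $S_1$ and $S_2$ (and trivial elsewhere) whose coordinates in $T^s$ realise chosen tuples of the $c_k$'s; analyse exactly when such an element is $A$-conjugate into $U\cap S$. Using the description $\varphi(a)=(\alpha_1,\dots,\alpha_t)\sigma$ with $\alpha_i\in\Aut(T^s)=\Aut(T)\wr\Sym(s)$, show that coverability of the test element forces its $S_1$- and $S_2$-coordinate tuples to agree up to a permutation in $\Sym(s)$ and coordinatewise $\Aut(T)$-conjugacy — i.e. to have the same multiset of $\Aut(T)$-classes. (3) This yields a counting bound: roughly, the number of $A$-orbits on suitable prime-power elements of $S$ must be small because every such element must be covered by $U$ via at most $n$ coset representatives $a_1,\dots,a_n$, so at most $n$ "configurations" of diagonal-type can be hit. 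Quantifying this by choosing test elements that are pairwise non-equivalent unless forced gives $s\cdot(m(T)-1)\le n$: the factor $m(T)-1$ comes from the number of ways a single coordinate's class can "disagree" (one class is distinguished, e.g. the identity or a fixed $c_1$, and the other $m(T)-1$ classes each create an obstruction), and the factor $s$ from the $s$ independent coordinates. (4) Finally note $\max\{s,m(T)-1\}\le s(m(T)-1)$ is trivial since both factors are $\ge1$ (using that $T$ nonabelian simple forces $m(T)\ge2$).

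I expect the main obstacle to be step (2)–(3): making the colouring/counting argument precise, in particular correctly accounting for the interplay between the $\Sym(t)$-action permuting the blocks $S_i$, the $\Sym(s)$-action permuting coordinates within a block, and the diagonal subgroup $D$ — and then extracting a clean product bound rather than something weaker. The model to follow is \cite[Proposition 3.1]{CP91} (originating in \cite[Proposition 2.2]{CP89}): there one constructs a graph on the coordinates, shows that a proper vertex colouring corresponds to a valid "spreading out" of class data that cannot be covered, invokes Theorem~\ref{thm:colouring} to bound the chromatic number by (maximum degree)$+1$, and reads off the inequality. The delicate point is choosing the test elements so that the resulting graph has controlled maximum degree while still being rich enough that a large chromatic number would contradict $P_A(G)=P_A(U)$; the bound $n$ on the number of available conjugating cosets is what caps the degree. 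Once the graph is set up correctly the inequality $s(m(T)-1)\le n$ should drop out, and the rest is the trivial observation in step (4).
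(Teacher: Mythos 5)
Your overall strategy (turn the covering condition into a graph-colouring obstruction and apply Theorem~\ref{thm:colouring}, following \cite[Proposition 3.1]{CP91}) is the right one, but the concrete set-up has a genuine gap. Your pivotal step (2) fails: for a $p$-element $y$ supported only on $S_1\times S_2$ and trivial in the remaining blocks, the hypothesis only provides \emph{some} $a\in A$ with $y^a\in U\cap S$, and the $\Sym(t)$-part of $\varphi(a)$ may move both nontrivial blocks into positions $i>2$; since $U\cap S=D\times\prod_{i>2}S_i$ contains $\prod_{i>2}S_i$ entirely, such an $a$ covers $y$ while imposing no relation at all between the class-patterns of the two blocks. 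So coverability does not force the $S_1$- and $S_2$-coordinate tuples to agree up to $\Aut(T)\wr\Sym(s)$, and the edges of your graph on the $s$ coordinates cannot be produced this way. The counting in step (3) is also not valid as stated: covering is achieved by arbitrary elements of $A$ (whose $\Aut(T^s)$-components vary over a large group even within a single coset of $G$), not just by $n$ coset representatives, so ``at most $n$ configurations can be hit'' does not bound anything.

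The paper's proof uses a different graph and a different source of the factor $s\cdot(m(T)-1)$. The vertices are the $t$ minimal normal subgroups $S_1,\dots,S_t$ of $G$ (not the $s$ coordinates inside one block); $\{S_i,S_j\}$ is an edge precisely when $S^{\Omega_k}=(S_i\times S_j)^{\Omega_k}$ for some $k\leqslant n$, so there are at most $n$ edges, and edge-transitivity of $A$ on this graph yields maximum degree at most $n$. The colours are the $\Aut(T^s)$-classes of $p$-elements of $T^s$, and the quantity $s\cdot(m(T)-1)$ enters only as a lower bound on the number of colours (via elements whose first $j$ coordinates equal a fixed nontrivial class representative and the rest are trivial). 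The covering condition is then applied to elements with arbitrary entries in \emph{all} $t$ blocks: whichever pair of blocks is sent by the conjugating element onto positions $1,2$ must receive the same colour, and that pair is automatically an edge because it is the pair of blocks inducing the socle on the corresponding orbit $\Omega_k$. This is exactly what closes the escape route that defeats your two-block test elements, and it shows there is no proper vertex colouring by the colour set $\mathcal T$; Theorem~\ref{thm:colouring} then gives $s\cdot(m(T)-1)\leqslant|\mathcal T|\leqslant d\leqslant n$, with your step (4) handling the trivial first inequality. To repair your argument you would need to redesign the graph (or the test elements) so that the pair of blocks forced to agree is always one of boundedly many pairs controlled by the $n$ orbits, which is precisely what the paper's block-graph does.
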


\begin{proof}
    Let $p$ be a prime divisor of $|T|$ such that the number of $\Aut(T)$-classes of $p$-elements in $T$ is equal to $m(T)$, and recall that the identity $1_T$ is a $p$-element. Thus there are at least two distinct $\Aut(T)$-classes of $p$-elements in $T$, so $m(T)\geqslant 2$. Hence 
    \begin{equation} \label{eq:max lt product} 
    \max\{ s, m(T)-1\} \leqslant s\cdot (m(T)-1), 
    \end{equation}
    so to establish the lemma, we need to bound the right hand side of the above inequality by~$n$. Recall that $S = S_1 \times \cdots \times S_t$, and each $S_i \cong T^s$. 
    
   As usual we write elements of $S$ as $t$-tuples $x=(x_1,\dots, x_t)$  with $x_i\in S_i$, for each $i$.   Let $\mathcal{T}$ be the set of all $\Aut(T^s)$-conjugacy classes of $p$-elements in $T^s$.
    
    \medskip\noindent
    \emph{Claim 1: $s\cdot (m(T)-1) \leqslant |\mathcal{T}|$.}
    
   \smallskip\noindent
    This is a very weak lower bound but sufficient for our purposes. To see that it holds, choose representatives $y_1,\dots, y_{m(T)-1}$ for the $m(T)$ distinct $\Aut(T)$-classes of nontrivial $p$-elements in $T$, and for  $1\leqslant \ell\leqslant m(T)-1$ and $1\leqslant j\leqslant s$, let $x_{\ell,j}\in T^s$ be the element with entries $1,\dots,j$ equal to $y_\ell$ and entries $j+1,\dots, s$ equal to the identity. Then the $x_{\ell,j}$ lie in pairwise distinct $\Aut(T^s)$-classes. This proves Claim 1.

    \medskip
We define   $\mathcal{S} = \{S_1,\ldots,S_t\}$.   By Lemma~\ref{l:2min} we may assume that $S^{\Omega_1}= S_1^{\Omega_1} \times S_2^{\Omega_1} \cong  S_1 \times S_2$.
    In particular,  $t=|\mathcal{S}|\geqslant 2$.
Each $p$-element $x=(x_1,\dots, x_t)\in S$ determines a map 
\begin{equation} \label{eq:defn of phi_x}
\phi_x:\mathcal{S}\to\mathcal{T},\ \mbox{by defining $\phi_x(S_i)$ as the $\Aut(T^s)$-class containing $x_i \in S_i$.}
\end{equation}
As $x$ runs over the $p$-elements in $S$ we clearly obtain all maps $\mathcal{S}\to\mathcal{T}$. 

 \medskip\noindent
    \emph{Claim 2: for each $p$-element $x=(x_1,\dots, x_t)\in S$, there exist $i,j\leqslant t$ and $k\leqslant n$ such that $i\ne j$,  $\phi_x(S_i)=\phi_x(S_j)$, and $S^{\Omega_k} = S_i^{\Omega_k} \times S_j^{\Omega_k} \cong S_i \times S_j$.}\quad  

   \smallskip\noindent
    From Lemma~\ref{l:P_A(S) = P_A(UcapS)} we have   $P_A(S)=P_A(U \cap S)$, so there exists $a\in A$ such that $x^a\in U\cap S$. Recall that each element of $A$, acting by conjugation, induces an automorphism of $S$, and this automorphism must permute the minimal normal subgroups $S_i$ of $G$ and hence must lie in the subgroup  $\Aut(T^s)\wr \Sym(t)$ of $\Aut(S)$ preserving the decomposition $S=S_1\times\dots\times S_t$. Suppose that the element $a$ induces the automorphism $(\alpha_1,\dots,\alpha_t)\sigma$ in $\Aut(T^s)\wr \Sym(t)$, where the $\alpha_i\in\Aut(T^s)$ and $\sigma\in \Sym(t)$. If $i, j$ are such that $S_i^\sigma=S_1$ and $S_j^\sigma = S_2$, then $i\ne j$, the first entry of $x^a$ is $x_i^{\alpha_i}$ and the second entry of $x^a$ is $x_j^{\alpha_j}$. Since $U \cap S = D\times \prod_{i=3}^t S_i$, as in \eqref{e:D}, it follows that $x_i$ and $x_j$ lie in the same $\Aut(T^s)$-class, that is, $\phi_x(S_i)=\phi_x(S_j)$.    Finally, for $k$ such that $(\Omega_k)^a = \Omega_1$, we have  $S^{\Omega_k} = (S^{\Omega_1})^{a^{-1}} = (S_1^{\Omega_1} \times S_2^{\Omega_1})^{a^{-1} } = S_i^{\Omega_k} \times S_j^{\Omega_k} \cong S_i \times S_j$. Thus Claim 2 is proved. 

    \medskip
\noindent \emph{Definition of the graph $\Gamma$:}    We define a graph $\Gamma$ with vertex set $\mathcal{S}$ such that $\{S_i, S_j\}$ is an edge of $\Gamma$ if and only if there exists $k\leqslant n$ such that $S^{\Omega_k}= (S_i\times S_j)^{\Omega_k}$. (Note that $k$ is not necessarily uniquely determined by $S_i, S_j$.)

    Thus $\Gamma$ has $t$ vertices and the number $E$ of edges satisfies  $E\leqslant n$.   
    Also $A$ acts as a group of automorphisms of $\Gamma$, and $A$ is transitive on the edge set since $A$ permutes the $\Omega_k$ transitively. Since each $S_i$ must act nontrivially on at least one $\Omega_k$, edge-transitivity implies that $A$ has at most two orbits on the $S_i$ and hence at most two orbits on vertices. Vertices in the same $A$-orbit are incident with the same number of edges. Thus either (i) $A$ is vertex-transitive and each vertex is incident with exactly $d$ edges, for some constant $d$, or (ii) $A$ has two orbits $\mathcal{S}_1, \mathcal{S}_2$ on vertices with $|\mathcal{S}_i|=t_i$ and $t=t_1+t_2$, and each vertex in $\mathcal{S}_i$ is incident with $d_i$ edges, for $i=1,2$ and constants $d_1, d_2$. In case (i), counting incident vertex-edge pairs gives $2E=dt$ with $t\geq 2$, 
    so 
    \begin{equation}\label{eq:bnd on k in i} 
    d = 2E/t\leqslant 2n/t\leqslant n.
    \end{equation}
    In case (ii), $\Gamma$ is bipartite, each edge is incident with a vertex from each set $\mathcal{S}_i$, and a similar count gives $E=d_1t_1=d_2t_2$, so each \( d_i  = E/t_i \leqslant n/t_i \leqslant n\) and hence 
    \begin{equation}\label{eq:bnd on k in ii} 
    \max\{d_1,d_2\}\leqslant n. 
    \end{equation}

 \medskip\noindent
    \emph{Claim 3: in case (i), $|\mathcal{T}|\leqslant d$, and in case (ii), $|\mathcal{T}|\leqslant \max\{d_1,d_2\}$. }\quad 

    \smallskip\noindent
    For any map $\phi:\mathcal{S}\to\mathcal{T}$, there is an element $x\in S$ such that $\phi=\phi_x$ (see \eqref{eq:defn of phi_x}). By Claim 2, there exist $i \neq j$ such that $\phi(S_i) =\phi_x(S_i)= \phi_x(S_j) =\phi(S_j)$, and an integer $k$ such that $S^{\Omega_k}=S_i^{\Omega_k} \times S_j^{\Omega_k}$, so that $\{S_i,S_j\}$ is an edge of $\Gamma$. Hence for any assignment of the elements of $\mathcal{T}$ to `colour' the vertices of $\Gamma$, at least one edge of $\Gamma$ receives the same `colour' for both of its incident vertices, so  there is no proper vertex colouring of $\Gamma$ by $\mathcal T$.   On the other hand, Theorem~\ref{thm:colouring} shows that $\Gamma$ can be coloured with $d+1$ colours in case (i) and with $\max \{d_1,d_2\}+1$ colours in case (ii). In case (i) this means that   $|\mathcal{T}| <  d+1$, which gives $|\mathcal T | \leqslant d$, and in case (ii) we deduce that $|\mathcal{T}| < \max\{d_1,d_2\}+1$, that is, $|\mathcal T| \leqslant \max\{d_1,d_2\}$. This proves Claim 3. 
 
    \medskip
 In case (i), using  Claims 1 and 3 and \eqref{eq:bnd on k in i}, we obtain
$$s\cdot(m(T)-1) \leqslant |\mathcal T | \leqslant d
\leqslant  n  $$ 
as required.
In case (ii),  using  Claims 1 and 3 and \eqref{eq:bnd on k in ii},
$$s\cdot (m(T)-1) \leqslant |\mathcal{T}| \leqslant \max\{d_1,d_2\} \leqslant n .$$
 This completes the proof of Lemma~\ref{l:2min2}.
\end{proof}

Finally we draw together the results of this section to prove Theorem~\ref{t:mainIT}.

\medskip\noindent\emph{Proof of Theorem~\ref{t:mainIT}.}\quad Let $A$ be a finite group with a normal subgroup $G$ of index $n$, and let $U$ be a proper subgroup of $G$ such that $P_A(G)=P_A(U)$ and  the action of $G$ on $[G:U]$, the set of cosets of $U$, is innately transitive.
By Lemma~\ref{l:cortriv}, we may assume that $\Core_A(U)=1$ and hence that $A$ acts faithfully and transitively on $\Omega=[A:U]= \{Ua\mid a\in A\}$. The group $G$ has $n$ orbits $\Omega_1,\dots,\Omega_n$  on $\Omega$ with $U\in\Omega_1$, and the $G$-action on $\Omega_1=[G:U]$ is innately transitive. If $G^{\Omega_1}$ is an affine group, then Lemma~\ref{l:affine} shows that $|G:U|\leqslant n$.  By Theorem~\ref{t:GmustbeHSHC}, we may assume that $G^{\Omega_1}$ has at least two plinths, and so $G^{\Omega_1}$ is a primitive group. Now $S=\Soc(G)=T^m$ for some nonabelian finite simple group $T$ and $m\geqslant 1$.
By Lemma~\ref{l:2min}, $\Soc(G^{\Omega_1})=S_1^{\Omega_1}\times S_2^{\Omega_1}\cong S_1\times S_2$ for distinct minimal normal subgroups $S_1, S_2$ of $G$, and by \eqref{e:index} we have  $|G:U|= |S_1|=|S_2|=|T|^s$ for some $s\geqslant 1$. By Lemma~\ref{l:2min2}, $\max\{s, m(T)-1\}\leqslant n$, and so, for the increasing integer function $h$ of Theorem~\ref{t:mT}, we have
\[
|G:U|=|T|^s \leqslant h(m(T))^n\leqslant h(n+1)^n.
\]
This completes the proof of Theorem~\ref{t:mainIT}. \qed

\medskip

Recall that Theorem~\ref{t:main} follows immediately from Theorem~\ref{t:mainIT}. The assertion about the increasing integer function $f$ mentioned in Remark~\ref{r:main} is justified by the last displayed inequality in the proof above.

\end{document}